\makeatletter \@addtoreset{equation}{section} \makeatother
\newtheorem{thm}{Theorem}[section]
\newtheorem{prop}[thm]{Proposition}
\newtheorem{lem}[thm]{Lemma}
\newtheorem{pb}[thm]{Problem}
\newtheorem*{thm*}{Theorem}
\theoremstyle{definition}
\newtheorem{definition}[thm]{Definition}
\newtheorem{exm}[thm]{Example}
\newtheorem{rmk}{Remark}
\newtheorem*{pb*}{Problem}
\newcommand\C{\mathbb C}
\newcommand\R{\mathbb R}
\newcommand\Z{\mathbb Z}
\newcommand\N{\mathbb N}
\newcommand\Q{\mathbb Q}
\newcommand\ii{i}
\newcommand{\sgn}{\operatorname{sgn}}
\renewcommand{\Re}{\operatorname{Re}} 
\renewcommand{\Im}{\operatorname{Im}}
\newcommand{\Rp}{\mathbb{R}_{>0}}
\newcommand{\Rm}{\R_{<0}}
\newcommand\PP{\mathbb P} 
\newcommand{\Hom}{\operatorname{Hom}}
\newcommand{\calH}{\mathcal{H}}
\renewcommand{\H}{\mathbb{H}}
\newcommand{\stokes}{\mathcal{S}} 
\newcommand{\XX}{\mathfrak{X}}
\newcommand{\XXp}{\hat \XX}
\newcommand\dt{\operatorname{DT}}
\newcommand{\pow}[1]{[\![ {#1} ]\!]}
\newcommand{\bfs}{\mathbf{s}}
\newcommand{\torus}{\mathbb{T}}
\renewcommand{\th}{\theta}
\newcommand{\thdual}{\theta^{\vee}}
\newcommand{\gdual}{\gamma^{\vee}}
\newcommand{\e}{\epsilon}
\newcommand{\holosect}{\Delta}
\newcommand{\coef}[2]{ a( {#1} )_{ {#2} } }
\newcommand{\Ham}{\operatorname{Ham}} 
\newcommand{\Gactive}{\Gamma^+_{\Omega}}
\newcommand{\Gactiver}{\Gamma_r^{\Omega}}
\newcommand{\mmg}[2]{ \Lambda\left({#1},{#2}\right) } 
\newcommand{\mg}[2]{ \Lambda_{{#1}}\left({#2}\right) }
\newcommand{\mgi}[2]{ \Lambda_{ 1-{#1} }^{-1} \left( {#2} \right) } 
\newcommand\bra{\langle}
\newcommand\ket{\rangle}
\newcommand{\del}{\partial}
\newcommand{\deld}[1]{\frac{\del}{\del {#1} }}
\newcommand{\frap}{\frac{1}{2\pi\ii}}
\newcommand\Id{\operatorname{Id}}
\newcommand\Aut{\operatorname{Aut}}
\newcommand{\Li}{\operatorname{Li}} 
\newcommand{\G}{\mathcal{G}}
\newcommand{\gl}{\mathfrak{gl}}
\newcommand\CC{\mathcal{C}}
\tikzset{cross/.style={cross out, draw=black, minimum size=2*(#1-\pgflinewidth), inner sep=0pt, outer sep=0pt},cross/.default={1pt}}
\title{A Riemann-Hilbert problem for uncoupled BPS structures}
\author{Anna Barbieri}
\date{}
\begin{document}

\maketitle
\begin{abstract}
\noindent We study the Riemann-Hilbert problem attached to an uncoupled BPS structure proposed by Bridgeland in \cite{bridg_RHpb}. We show that it has \lq\lq essentially\rq\rq\ unique meromorphic solutions  given by a product of Gamma functions. We reconstruct the corresponding connection.
\end{abstract}

\section*{Introduction}
This paper studies the instance of Riemann-Hilbert problem proposed by Bridgeland in \cite{bridg_RHpb} for uncoupled BPS structures. It is stated in terms of complex-valued functions and it is solved in \cite{bridg_RHpb} for a fixed value of a certain parameter. We show that for any value of that parameter the solution is a pair of meromorphic functions expressed explicitly as a product of Gamma functions. An integral representation of the solution is used to reconstruct the corresponding connection.

The same class of Riemann-Hilbert problems was considered by Filippini, Garcia-Fernandez and Stoppa in \cite{FGS}, motivated by the physics work \cite{GMN}. Their solution takes values in the automorphism group of an algebraic torus. The contexts of \cite{FGS} and \cite{bridg_RHpb} are slightly different, and comparing the two articles might require some efforts. We show in which sense and to which extent the two problems (and the corresponding solutions) are related, and we propose a new way to express the solution. In turn, this is analogous to the \lq\lq conformal limit\rq\rq\ of coordinates for the moduli spaces of $\mathcal N=2$ four-dimensional gauge theories compactified on a circle, presented by Gaiotto in \cite{gaiotto}, the main difference being that we consider coordinates on a complex torus. Our discussion about the solutions will allow us to consider also the \lq\lq quantized\rq\rq\ version of this problem, \cite{qRH}. 

Riemann-Hilbert problems are inverse problems in the theory of differential equations. They classically consist in seeking a piecewise holomorphic function on $\C^*$ with values in a Lie group, with prescribed behaviour near the origin and jumping discontinuities along a real-codimension $1$ boundary, \cite{fokas_painleve}. A BPS structure is an instance of the stability data defined by Kontsevich and Soibelman \cite{KS} and contains the information from the unrefined Donaldson-Thomas theory of dimension three Calabi-Yau categories. It defines naturally a Riemann-Hilbert problem with values in the automorphism group of an algebraic torus that, in some nice cases, can be traslated into a scalar problem, \cite{bridg_RHpb}. Riemann-Hilbert problems for BPS structures are relevant in some attemps of defining a Frobenius manifold type structure from Donaldson-Thomas theory, \cite{BS,SS}.

In the rest of the introduction we illustrate the content of the paper.
\smallskip\\
\textbf{BPS structures.} In the first Section we briefly recall some notions about integral BPS structures $(\Gamma,Z,\Omega)$ and the associated twisted torus $\torus$. They are defined by a finite rank lattice $\Gamma$ with a pairing $\bra-,-\ket$, a homomorphism $Z:\Gamma\to\C$, and a map $\Omega:\Gamma\to \Z$. The twisted torus is the space 
	\[\torus := \big\{\xi:\Gamma\to\C^*:\xi(\alpha+\beta)=(-1)^{\bra\alpha,\beta\ket}\xi(\alpha)\xi(\beta) \big\},
	\]
with characters $x_{\gamma}$, $\gamma\in\Gamma$, acting on it as $x_{\gamma}(\xi)=\xi(\gamma)$. We restrict to a class of BPS structures called uncoupled. This is the analogue of the physics terminology of \lq\lq mutually local\rq\rq\ BPS structures. We impose moreover finiteness and convergence hypotheses.

The basic example of an uncoupled BPS structure is the \lq\lq doubled $A_1$ BPS structure\rq\rq , defined by a lattice $\Gamma=\Z\cdot\alpha\oplus\Z\cdot\alpha^{\vee}$, a central charge $Z\in\Hom\left(\Gamma,\C\right)$ with $Z(\alpha^{\vee})=0$, and a symmetric map $\Omega:\Gamma\to\Z$ with $\Omega(\pm\alpha)=1$ and vanishing otherwise.
\smallskip\\
\noindent \textbf{A Riemann-Hilbert problem.} In the second section a Riemann-Hilbert problem for uncoupled BPS structures is introduced. To the active rays 
\[\ell_{\gamma}=Z(\gamma)\Rp\subset \C^*,\ \text{ for } \Omega(\gamma)\neq 0,\] are attached transforms $\stokes(\ell_{\gamma})$ of the torus $\torus$. Let $\Sigma\subset\C^*$ be the union of active rays. We are interested in finding a sectionally holomorphic map $$\Psi: \C^*\setminus\Sigma \to \Aut\left(\torus\right)$$ with discontinuities on each component $\ell_{\gamma}$ of $\Sigma$ given by the composition with $\stokes(\ell_{\gamma})$ (jumping condition), asymptotic behaviour near the origin 
\begin{equation*}
\lim_{t\to 0}\Psi(t) \circ \exp(Z/t) = \Id,
\end{equation*}
and algebraic behaviour at infinity. The uniqueness of the solution depends on the possibility of extending the restriction of $\Psi$ to any sector bounded by two consecutive active rays over its edges. In the uncoupled case, the $\Aut(\torus)-$valued Riemann-Hilbert problem can be turned into a scalar problem (that is with complex values) by fixing a point $\xi\in\torus$, evaluating $\Psi$ in $\xi$ and then applying it to a point $\beta\in\Gamma$. We obtain the following diagram that allows for a complex-analytical approach.
\begin{equation*}
\xymatrix{
\C^*\setminus \Sigma \ar[r]^-{\Psi} & \Aut(\torus) \ar[r]^-{ev_{\xi}} & \torus\ar[r]^-{ev_{\beta}} & \C \ar@{^{(}->}[d] \\
\quad\quad\quad\Delta\subset\H_{\Delta} \ar@{^{(}->}@<-0.6ex>[u] \ar[rrr]^-{Y_{\beta,r}} &&&\C\PP^1
}
\end{equation*}
Here $\Delta$ is a holomorphicity sector for a solution $\Psi$ and $\H_{\Delta}$ is any open half-plane centred in a non-active ray $r$ contained in $\Delta$. For non-active rays $r$, we seek for complex valued functions $Y_{\beta,r}$, that can be compared in the common domain of definition. This is the approach of \cite{bridg_RHpb}, where $Y_{\beta,r}$ are required to be holomorphic and never-vanishing. In fact the scalar problem as stated in \cite{bridg_RHpb}  does not always admit solutions (Proposition \ref{prop_holoRH_nosol} below), but it can be reformulated and solved in terms of meromorphic functions (Problem \ref{meroRHpb}) below. This is the scalar counter-part of the $\Aut(\torus)-$valued Riemann-Hilbert problem. It has \lq\lq essentially\rq\rq\ at most one solution, i.e.\ unique up to the choice of vanishing order of a finite number of points. 
\smallskip\\
\noindent\textbf{Solutions and Hamiltonian vector field.} The solution to the scalar Riemann-Hilbert problem and the corresponding connection are considered in sections \ref{sec_solutions} and \ref{sec_hamilt}, which are based on section \ref{sec_complex_analysis}, where we develope the analytical background. We introduce the function 
\begin{equation*}
	\mg{x}{y}:= \frac{\Gamma(x+y) \cdot e^y}{y^{x+y-\frac{1}{2}}\cdot \sqrt{2\pi}}.
	\end{equation*}
It is a modification in two variables of the Gamma function and its relevant properties are listed in Lemma \ref{lem_Sol_jump} and Theorem \ref{thmSolInt}.
	
In section \ref{sec_solutions} we prove that, for every $\xi\in\torus$, there exist non-trivial meromorphic functions $\left\{Y_{\beta,r}\right\}_{\beta,r}$ solving the meromorphic Riemann-Hilbert problem for uncoupled BPS structures (Theorem \ref{thm_sol_meroRH_finite}). In the doubled $A_1$ case, for example, for any $\xi\in\torus$, $Y_{\alpha,r}\equiv 1$ and the solution is encoded in two meromorphic functions $Y_{\pm}:\C^*\setminus\pm\ii Z(\alpha)\Rp \to \C\PP^1$, obtained by gluing together $Y_{\alpha^\vee,r}$ as $r$ lies on one or the other side of $\pm\ell$.   We have
	\begin{equation*}
	Y_-(t)=\mg{\frac{\th}{2\pi\ii}}{-\frac{Z(\alpha)}{2\pi\ii t}} \quad \text{and} \quad Y_+(t)={\mgi{\frac{\th}{2\pi\ii}}{\frac{Z(\alpha)}{2\pi\ii t}}},
	\end{equation*}
where $\th:=\ln\xi(\alpha)$, for a chosen branch of the logarithm. These obviously coincide with the result in \cite{bridg_RHpb} when $\th=0$, and are very closed related to \cite[{Eq.\ 3.10}]{gaiotto}.

The inverse problem is considered in Section \ref{sec_hamilt} for a BPS structure with trivial pairing. \lq\lq Doubling\rq\rq\ the construction, one has that $\torus$ has a symplectic structure. From $\{Y_{\beta,r}\}_{\beta,r}$ we deduce $\Psi$ and compute a connection $\nabla$ on the trivial $\Aut(\torus)$-bundle over $\C\PP^1$, such that $\nabla\Psi=0$. Say $$F=\frac{1}{2\pi i} \sum_{\gamma\in\Gamma\setminus\{0\}} \Omega(\gamma)\Li_2\left(x_{\gamma}\right),$$ 
a function $\torus\to\C$. $\nabla$ has the form $\nabla=d-\left(\frac{Z}{t^2}-\frac{\Ham_{F}}{t}\right) d t$. Analogous computations allows to define a similar connection for any uncoupled BPS structure.

\subsection*{Acknowledgements} 
The author is grateful to Tom Bridgeland 
for many interesting discussions. 
Thanks are also due to Dylan Allegretti and Jacopo Stoppa for their comments on the preliminary version. This research was supported by the European Research Council, through the project ERC-AdG StabilityDTCluster.

\section{BPS structures and notation}\label{sec_background}

We briefly recall the notion of a BPS structure. The aim of this section is to fix the notation for the rest of the article.  Most of the definitions recalled in the following are from  \cite{bridg_RHpb}, where it is possible to find a wider explanation of the mentioned objects. 

\begin{definition}\label{def_BPS} A \emph{BPS structure} $(\Gamma,Z,\Omega)$ of rank $n$ is the datum of a finite rank lattice $\Gamma\simeq \Z^{\oplus n}$ (the \emph{charge lattice}) endowed with an \emph{intersection form}, that is an integral, bilinear and skew-symmetric pairing 
	\begin{equation*}
	\bra -,-\ket :\Gamma\times\Gamma\to\Z,
	\end{equation*}
a homomorphism $Z:\Gamma\to \C$, and a map of sets $\Omega:\Gamma\to\Q$, such that
\begin{itemize}
\item[(i)] $\Omega$ is symmetric, i.e.\ $\Omega(-\alpha)=\Omega(\alpha)$ for all $\alpha\in\Gamma$, and 
\item[(ii)] there is a uniform constant $C>0$ such that, for some fixed norm $||\cdot||$ in $\Gamma\otimes\R$, $|Z(\alpha)|>C\cdot||\alpha||$ for all $\alpha$ with $\Omega(\alpha)\neq 0$.
\end{itemize} 
$Z$ is called a \emph{central charge} and $\Omega$ is the \emph{BPS spectrum}.

\end{definition}
We denote by $\Gactive$ the subset of $\Gamma$
	\begin{equation}\label{Gactive}
	\Gactive:=\left\{\gamma\in\Gamma\setminus\{0\} : \Omega(\gamma)\neq 0 \text{ and } Z(\gamma)\in\calH^+\right\},
	\end{equation}
where $\calH^+$ is the upper half-plane together with the negative real line $$\calH^+=\{z\in\C^* : 0< \arg(z) \leq \pi\}.$$

\begin{definition} An \emph{active class} is a point $\gamma\in\Gamma$ such that $\Omega(\gamma)\neq 0$. For every active class, we introduce an \emph{active ray} $\ell_{\gamma}:=Z(\gamma)\Rp\subset\C^*$. An active ray is sometimes referred to as a \emph{BPS ray}. A ray $r\subset\C^*$ which is not active is said to be \emph{generic}.
\end{definition}
\begin{definition} A \emph{null vector} is a point $\alpha\in\Gamma$ such that $\bra\alpha,\beta\ket= 0 $ for all $\beta\in\Gamma$.
\end{definition}

\begin{definition}\label{conditions} A BPS structure is said to be 
\begin{itemize}
\item \emph{generic} if for any two active classes $\gamma_1,\gamma_2$, the existence of a real non-zero $\lambda$ such that $Z(\gamma_1)=\lambda Z(\gamma_2)$ implies that $\bra\gamma_1,\gamma_2\ket=0$;
\item \emph{uncoupled}, if $\bra \gamma,\delta\ket = 0 $ for all active classes $\gamma,\delta$;
\item \emph{integral} if $\Omega$ takes values in $\Z$; and 
\item \emph{convergent} if there exists $\lambda>0$ such that $\sum_{\gamma\in\Gamma}|\Omega(\gamma)|\cdot e^{-\lambda|Z(\gamma)|}<\infty$.
\end{itemize}
\end{definition}
\noindent In particular an uncoupled BPS structure is generic. 

In this article we will mostly assume that a BPS structure is ray-finite, i.e.\ there are finitely many active rays, or finite, i.e.\ there are only finitely many active classes $\gamma\in\Gamma$.

\subsubsection*{Twisted torus}
The algebra $\C[\Gamma]$ of formal elements $x_{\alpha}$, $\alpha\in\Gamma$, comes endowed with a commutative product $\cdot$
	\begin{equation*}
	x_{\alpha}\cdot x_{\beta} = (-1)^{\bra\alpha,\beta\ket}x_{\alpha+\beta},
	\end{equation*}
 and Poisson Lie bracket $[-,-]$ induced by the intersection form
 	\begin{equation*}
	[x_{\alpha}, x_{\beta}] = \bra\alpha,\beta\ket x_{\alpha}\cdot x_{\beta}. 
	\end{equation*}
A central charge $Z:\Gamma\to\C$ acts on $\C[\Gamma]$ as a derivation: $Z(x_{\alpha})= Z(\alpha)x_{\alpha}$.
	
The \emph{twisted torus} is 
	\begin{equation*}
	\torus := \big\{\xi:\Gamma\to\C^*:\xi(\alpha+\beta)=(-1)^{\bra\alpha,\beta\ket}\xi(\alpha)\xi(\beta) \big\}.
	\end{equation*}
Elements of $\C[\Gamma]$ act as characters on $\torus$:
	\begin{equation*}
	x_{\alpha}(\xi)=\xi(\alpha)\in\C^*,
	\end{equation*}
and $Z$ extends to the twisted torus $\torus$ via
	\begin{equation}\label{Z_deriv_and_torus}
	(Z\cdot \xi)(\alpha) = Z(\alpha)\xi(\alpha),
	\end{equation}
for every $\alpha\in\Gamma$, $\xi\in\torus$.

It is useful to introduce the maps
	\begin{equation*}
	\th:=\ln\xi:\Gamma\to\R\times[0,2\pi[\cdot \ii,
	\end{equation*}
satisfying
	\begin{equation*}
	\th(\alpha+\beta)= \pi\ii\bra\alpha,\beta\ket + \th(\alpha)+\th(\beta) \mod 2\Z\pi\ii.
	\end{equation*}
Given any basis $\{\gamma_1,\dots,\gamma_n\}$ of $\Gamma$, a generic element $\xi$ of $\torus$ is determined by
	\begin{equation*}
	\xi_1:=\xi(\gamma_1),\dots, \xi_n:=\xi(\gamma_n),
	\end{equation*}
or by logarithmic coordinates
	\begin{equation*}
	\th_1:=\th(\gamma_1),\dots, \th_n:=\th(\gamma_n).
	\end{equation*}
We can also interpret $\theta_i$ as functions on the torus with non-trivial monodromy or make other choices of the branch of the complex logarithm: section \ref{sec_solutions} and \ref{sec_hamilt} would then require minor modifications.

\subsubsection*{Doubling construction}
A BPS structure $(\Pi,Z,\Omega)$ can be embedded into a richer structure, via \emph{doubling the construction}, \cite[{Sect.\,2.6}]{KS}. This is particularly useful when the intersection form $\bra-,-\ket$ is degenerate. To this end, the lattice $\Pi\oplus\Pi^{\vee}$, where $\Pi^{\vee}:=\Hom(\Pi,\Z)$, is considered. $\Pi\oplus\Pi^{\vee}$ is endowed with a non-degenerate skew-symmetric bilinear form denoted again by $\bra-,-\ket$ and defined as follows 
	\begin{align}
	&\text{for } \alpha',\alpha''\in\Pi,\ \nu',\nu''\in\Pi^{\vee} \notag \\
	&\bra (\alpha',\nu'),(\alpha'',\nu'')\ket = \bra\alpha',\alpha''\ket + \nu''(\alpha')-\nu'(\alpha'').\label{doubled_pairing}
	\end{align}
A doubled BPS structure is obtained by extending the central charge $Z$ and the BPS spectrum $\Omega$ to $\Pi\oplus\Pi^{\vee}$. We set 
	\begin{equation}\label{Z_Omega_doubled}
	Z(\beta,\nu):=Z(\beta),\quad \text{and}\quad \Omega(\beta,\nu)=\begin{cases} \Omega(\beta) &\text{if }\nu=0\\ 0&\text{otherwise}\end{cases}.
	\end{equation}
\begin{definition}\label{def_double} We refer to $\Pi\oplus\Pi^{\vee}$ as the doubled lattice and to this procedure the \lq\lq doubling procedure\rq\rq . With the choice \eqref{Z_Omega_doubled} above, $(\Pi\oplus\Pi^{\vee},Z, \Omega)$ is called a \emph{doubled BPS structure}. 
\end{definition} 
\begin{rmk}If $(\Pi,Z,\Omega)$ is an integral convergent uncoupled BPS structure, then so is its double $(\Pi\oplus\Pi^{\vee},Z,\Omega)$, and any $(\gamma,\bra-,\gamma\ket)\in\Pi\oplus\Pi^{\vee}$ is null.
\end{rmk}
A basis $\left\{\gamma_1,\dots,\gamma_m\right\}$ of $\Pi$ can be completed to a basis $\left\{\gamma_1,\dots,\gamma_m, \gdual_1,\dots,\gdual_m\right\}$ of $\Pi\oplus\Pi^{\vee}$, with  $\gdual_1,\dots, \gdual_m\in\Pi^{\vee}$ defined by
	\begin{equation}
	\gdual_j\left(\gamma_k\right) = \delta_{jk} \quad \forall j,k=1,\dots,m.
	\end{equation}
The twisted torus $\torus$ associated with a doubled BPS structure inherits logarithmic coordinates
	\begin{equation*}
	\th_j:=\th(\gamma_j),\quad \th_j^{\vee}:=\th(\gamma_j^{\vee}),\quad j=1,\dots, m,
	\end{equation*}
and comes equipped with the symplectic form $\omega=-\sum_{j=1}^{m} d\th_j \wedge d\th_j^{\vee}$.

\section{Riemann-Hilbert problems}\label{sec_RHpbs}
A Riemann-Hilbert (RH) problem classically consists in finding 
maps from $\C^*$ to a complex manifold with prescribed jumps across the supports of curves in $\C^*$. See for instance \cite[{Chapter\,3}]{fokas_painleve} for a brief introduction to the topic. Suppose we are given a complex manifold $\mathcal{M}$ together with a complex Lie group $\G$ acting on it, the union $\Sigma$ of supports of curves in $\C^*$ intersecting transversally at the origin, and a map $S:\Sigma\to \G$. Solving the RH problem defined by $S$ and with values in $\mathcal{M}$ means seeking a piecewise holomorphic function $\Psi:\C^*\setminus\Sigma\to \mathcal{M}$ such that for every $t\in\Sigma$ the limits $\Psi_{\pm}(t)$ of $\Psi$ from the opposite sides of $\Sigma$ exist and satisfy
	\begin{equation*}
	\Psi_+(t) = \Psi_-(t)S(t),
	\end{equation*}
and $\Psi$ has fixed constant limit $\lim_{t\to 0}\Psi(t)$ along any direction in $\C^*\setminus\Sigma$.

Existence of a solution is not guaranteed in general. The problem in the scalar case (i.e.\ when $\mathcal{M}=\C$) was widely treated for instance in \cite[{Muskhelishvili,\,1946}]{muske} or \cite[{Gakhov,\,1966}]{gakhov_boundary_pbs}, and solved for $S(t)$ H\"older continuous on the contour $\Sigma$ apart from a finite number of points. The solution to a scalar Riemann-Hilbert problem is unique provided that its restriction to a holomorphicity sector $\Delta$ can be continued to an invertible function on its closure $\bar{\holosect}\subset\C\PP^1$.

\subsection{RH problems for finite BPS structures}\label{sec_RHpb}

A ray-finite, integral, convergent BPS structure $(\Gamma,Z,\Omega)$ induces naturally a RH problem with values in the automorphism group $\Aut(\torus)$ of the twisted torus. Heuristically, attached to any active ray $\ell$ there is a transform $\stokes(\ell)$ defined by pull-back in $\C[\Gamma]$
	\begin{equation*}
	\stokes^*(\ell) : x_{\beta}\mapsto x_{\beta} \cdot \prod\limits_{Z(\alpha)\in\ell} (1-x_{\alpha})^{\Omega(\alpha)\bra\beta,\alpha\ket} .
	\end{equation*}
We refer to \cite{KS,bridg_RHpb} for the fundational issues about $\stokes(\ell)$ and the general definition. $\stokes(\ell)$ can be viewed as the time 1 Hamiltonian flow of a function on an open subset $U_{\ell}\subset\torus$ with respect to the Poisson bracket $\{-,-\}=[-,-]$ on $\torus$. \cite[{Proposition\,4.1}]{bridg_RHpb} states that there exist such an open subset $U_{\ell}$ where the power series 
	\begin{equation*}
	\dt(\ell):= \sum_{Z(\alpha)\in\ell}\Omega(\alpha)\sum_{h\geq 1} \frac{x_{\alpha}^h}{h^2} = \sum_{Z(\alpha)\in\ell} \Omega(\alpha)Li_2(x_{\alpha}),
	\end{equation*}
is absolutely convergent, and the time 1 Hamiltonian flow of this map is the holomorphic map $\stokes(\ell):U_{\ell}\to\torus$.

Let $\Sigma\subset\C^*$ be the union of active rays 
\begin{equation*}
\Sigma:=\bigcup_{\gamma\text{ active}}\ell_{\gamma}.
\end{equation*}
\begin{pb}\label{autTvaluedRHpb}
The $\Aut(\torus)$-valued RH problem attached to $(\Gamma,Z,\Omega)$ consists in finding  a piecewise holomorphic map $\Psi: \C^*\setminus\Sigma \to \Aut(\torus)$ with discontinuities on each component $\ell_{\gamma}$ of $\Sigma$ given by the composition with $\stokes(\ell_{\gamma})$ (jumping condition), and with  asymptotic behaviour near the origin (asymptotic condition)
\begin{equation*}
\lim_{t\to 0} \Psi(t)\circ \exp(Z/t) = \Id,
\end{equation*}
where the action of $Z$ on $\torus$ is given in \eqref{Z_deriv_and_torus}.
\end{pb}

For any fixed point $\xi\in\torus$, such an $\Aut(\torus)$-valued RH problem induces a problem with values in $\torus$ simply by evaluating on $\xi$ any automorphism of the torus.
\begin{definition}\label{TvaluedRHpb}
The $\torus$-valued RH problem for $(\Gamma,Z,\Omega)$ is defined as the problem of finding $\hat{X}:\C^*\setminus\Sigma\to\torus$, with discontinuities $\stokes(\ell_{\gamma})\in\Aut(\torus)$ and  asymptotic behaviour $\lim_{t\to 0}\hat{X}(t)\cdot\exp(Z/t) = \xi$.
\end{definition}
Notice that the hypothesis of ray-finiteness of the structure $(\Gamma,Z,\Omega)$ is essential to define the problem in Definition \ref{TvaluedRHpb}, while BPS structures might present countably many active rays.

\subsection{Scalar RH problems for uncoupled BPS structures}\label{scalarRHpb}

If moreover $(\Gamma,Z,\Omega)$ is generic and uncoupled, then for any choices of $\xi$ the problem of Definition \ref{TvaluedRHpb} can be turned into a scalar problem (\cite[{sections\,4.2}]{bridg_RHpb}) involving maps 
	\begin{equation}\label{defYbeta}
	\hat{Y}_{\beta}(t):=x_{\beta}(\hat{X}(t)\cdot e^{Z/t} \cdot \xi^{-1}):\C^*\setminus\Sigma\to\C,
	\end{equation}
and defined by functions $\stokes_{\ell}:\Gamma\times\C^*\to\C$,
	\begin{equation*}
	\stokes_{\ell}(\beta,t):=\prod\limits_{\gamma\in\ell}(1-\xi(\gamma)e^{-Z(\gamma)/t})^{\Omega(\gamma)\bra\beta,\gamma\ket}.
	\end{equation*}
\begin{rmk} This does not applies to non-uncoupled BPS structures. It depends on the fact that $\stokes_{\ell}(\beta,t)$ is trivial when $\beta$ is active. 
\end{rmk}
\begin{definition}\label{uncoupled_stokes}
In analogy with the theory of differential equations, we call $\stokes_{\ell}(\beta,t)$ a \emph{Stokes factor} of the problem. For any fixed $\beta\in\Gamma$, we will call also $\stokes_{\ell}(t)$ a Stokes factor.
\end{definition}

Let $(\Gamma,Z,\Omega)$ be an integral generic convergent \emph{uncoupled} BPS structure and fix $\xi\in\torus$. For any ray $l$, let $\H_{l}$ be the open half-plane centred in $l$
	\begin{equation*}
	\H_l=\{v\cdot z\in\C^* : v\in l, -\pi/2<\arg(z) < \pi/2\}.
	\end{equation*}
\begin{pb}[{\cite[Problem\,4.3]{bridg_RHpb}}]\label{holoRHpb} 
For each non-active ray $r\in\C^*$ and for every $\beta\in\Gamma$, we seek a holomorphic function $Y_{\beta,r}:\H_r\to\C^*$ such that the following conditions are satisfied.
\begin{itemize}
\item[$RH_1$] Suppose that two generic rays  $r_1\neq r_2$  form the boundary rays of a convex sector $\Delta\subset\C^*$ taken in clockwise order, then for all $t\in\H_{r_1}\cap\H_{r_2}$ with $0<|t|\ll 1$, 
	\begin{equation*}
	Y_{\beta,r_1}(t)= Y_{\beta,r_2}(t) \cdot \prod_{Z(\gamma)\in\Delta}(1-\xi(\gamma)e^{-Z(\gamma)/t})^{\Omega(\gamma)\bra\beta,\gamma\ket};
	\end{equation*}
\item[$RH_2$]\label{hRH_cond_lim0} $\lim\limits_{\substack{t\to 0 \\ t\in\H_r}} Y_{\beta,r}(t)=1$;
\item[$RH_3$] there exist $k=k(\beta,r)$ such that for all $t\in\H_r$ with $|t|\gg 0$, 
	\begin{equation*}
	|t|^{-k} < | Y_{\beta,r}(t) | <|t|^k. 
	\end{equation*}
\end{itemize}
\end{pb}

Problem \ref{holoRHpb} has the advantage of involving complex functions, moreover it admits at most one solution, \cite{bridg_RHpb}. A solution $\{Y_{\beta,r}\}_{\beta,r}$ of \ref{holoRHpb} is related with the functions $\hat Y_{\beta}$ in \eqref{defYbeta} via analytic continuation to half-planes of the restriction of $\hat Y_{\beta}$ to the holomorphicity sectors. 

Problem \ref{holoRHpb} was solved in this formulation in \cite{bridg_RHpb} for the special fixed point $\xi\equiv 1$, but it is easily seen that it often does not admit solution. 

\begin{prop}\label{prop_holoRH_nosol} Suppose that there exists an active class $\gamma$ such that $\th(\gamma)=\ln(\xi(\gamma))\neq 0$ and $Z(\gamma)/\th(\gamma)\in\overline\H_{\ell_{\gamma}}$. Then the Problem \ref{holoRHpb} does not admit a solution.
\end{prop}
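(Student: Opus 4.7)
The plan is to contradict the jump condition $RH_1$ by locating a point in the common domain of two adjacent solutions at which the Stokes factor $\stokes_{\ell_\gamma}(\beta,\cdot)$ is forced to vanish or blow up, clashing with the $\C^*$-valued holomorphic requirement on the $Y_{\beta,r}$. Set $t_0:=Z(\gamma)/\theta(\gamma)$. A direct substitution gives $Z(\gamma)/t_0=\theta(\gamma)$, and hence $\xi(\gamma)\,e^{-Z(\gamma)/t_0}=1$; consequently the factor
\[
(1-\xi(\gamma)e^{-Z(\gamma)/t})^{\Omega(\gamma)\langle\beta,\gamma\rangle}
\]
in any Stokes factor across $\ell_\gamma$ has a zero (resp.\ pole) of order $|\Omega(\gamma)\langle\beta,\gamma\rangle|$ at $t=t_0$, according to the sign of $\Omega(\gamma)\langle\beta,\gamma\rangle$.

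Next, I would pick $\beta\in\Gamma$ with $\langle\beta,\gamma\rangle\neq 0$, chosen generically so that the total order at $t_0$ of the full product $\stokes_{\ell_\gamma}(\beta,\cdot)$ remains non-zero: ray-finiteness and convergence ensure that only finitely many active classes $\gamma'$ lie on $\ell_\gamma$, so only finitely many linear conditions on $\beta$ must be avoided to preclude accidental cancellation. I then pick two generic rays $r_1,r_2$ close to $\ell_\gamma$ bounding (in clockwise order) a convex sector whose unique active ray is $\ell_\gamma$; if $t_0$ lies in the open half-plane $\H_{\ell_\gamma}$, then choosing $r_1,r_2$ sufficiently close to $\ell_\gamma$ gives $t_0\in\H_{r_1}\cap\H_{r_2}$, and evaluating $RH_1$ at $t_0$ (which is valid on the whole common domain by analytic continuation from a neighborhood of $0$) yields
\[
Y_{\beta,r_1}(t_0)=Y_{\beta,r_2}(t_0)\cdot\stokes_{\ell_\gamma}(\beta,t_0),
\]
with right-hand side equal to $0$ or $\infty$, contradicting that both $Y_{\beta,r_i}\colon\H_{r_i}\to\C^*$ are holomorphic and non-vanishing at $t_0$.

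The main subtlety is the boundary case $t_0\in\partial\H_{\ell_\gamma}$, where $t_0$ lies in exactly one of $\H_{r_1}$, $\H_{r_2}$ for any choice of nearby generic rays. I expect to resolve it by a limit argument: if $t_0\in\H_{r_1}$, continuity gives $Y_{\beta,r_1}(t_n)\to Y_{\beta,r_1}(t_0)\in\C^*$ along any sequence $t_n\to t_0$ inside $\H_{r_1}\cap\H_{r_2}$, while $\stokes_{\ell_\gamma}(\beta,t_n)\to 0$ or $\infty$, so the jump relation forces $Y_{\beta,r_2}(t_n)$ to blow up or vanish; combining this with analytic continuation of $Y_{\beta,r_2}$ across non-active rays on the appropriate side (where intervening Stokes factors are trivial) produces the contradiction with holomorphic non-vanishing on the extended domain.
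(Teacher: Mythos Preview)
Your argument is correct and follows essentially the same route as the paper: locate $t_0=Z(\gamma)/\theta(\gamma)$ as a zero of the factor $1-\xi(\gamma)e^{-Z(\gamma)/t}$, choose $\beta$ with $\langle\beta,\gamma\rangle\neq 0$, pick generic rays $r_1,r_2$ flanking $\ell_\gamma$ with $t_0\in\H_{r_1}\cap\H_{r_2}$, and read off a contradiction from $RH_1$ against the never-vanishing hypothesis. The paper's proof is terser---it simply assumes $t_0$ lies in the sector between $i\ell_\gamma$ and $\ell_\gamma$, takes $\langle\beta,\gamma\rangle>0$, and observes that the $\gamma$-factor divides $Y_{\beta,r_1}/Y_{\beta,r_2}$---whereas you are more explicit about possible cancellation from other classes on $\ell_\gamma$, about extending $RH_1$ by analytic continuation beyond the regime $|t|\ll 1$, and about the boundary case $t_0\in\partial\H_{\ell_\gamma}$; these are refinements the paper leaves implicit.
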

\begin{proof}
In the hypothesis of the Proposition, suppose for instance that $\Z(\gamma)/\th(\gamma)$ is in the convex sector between $\ii\ell_{\gamma}$ and $\ell_{\gamma}$. Suppose moreover that the Problem \ref{holoRHpb} admit solutions $Y_{\beta,r}:\H_r\to\C^*$ for every non active ray $r$. Let $\beta$ such that $\bra\beta,\gamma\ket>0$. There are two distinct non-active rays $r_1$ and $r_2$ such that $\Delta=\H_{r_1}\cap \H_{r_2}$ contains $t = Z(\gamma)/\theta(\gamma)$ and $\ell_{\gamma}$. This implies that $\left(1-e^{\th(\gamma)-Z(\gamma)/t}\right)$ divides $Y_{\beta,r_1}\cdot \left(Y_{\beta,r_2}\right)^{-1}$ for every $\beta\in\Gamma$, with $Y_{\beta,r_i}$ never vanishing in $\Delta$. But $t=Z(\gamma)/\th(\gamma)\in\Delta$ is a zero of $\left(1-e^{\th(\gamma)-Z(\gamma)/t}\right)$, yielding a contradiction.
\end{proof}
\noindent In particular, if $\theta(\gamma)\in\R\setminus\{0\}$, then $Z(\gamma)/\theta(\gamma)$ lies in one of the active rays $\pm\ell_{\gamma}$. 

Proposition \ref{prop_holoRH_nosol} is not a counterexample to the existence of piecewise continuous solutions to \ref{TvaluedRHpb} and we reformulate the scalar Riemann-Hilbert problem in terms of meromorphic functions.

\begin{pb}[Meromorphic RH problem]\label{meroRHpb}
For every $\beta\in\Gamma$ and for each non-active ray $r$, we seek a meromorphic function $Y_{\beta,r}: \H_{r}\to \C\PP^1$ satisfying the following conditions:
\begin{enumerate}\setcounter{enumi}{-1}
\item[$RH_0$]\label{mRH_cond_mero} $Y_{\beta,r}$ is holomorphic and $\C^*$-valued away from a finite number of zeroes or poles in position $t=\frac{Z(\gamma)}{\th(\gamma)+2k\pi\ii}$, $\gamma\in\Gamma$, for some $k\in\Z$;
\item[$RH_1$] suppose that two generic rays  $r_1\neq r_2$  form the boundary rays of a convex sector $\Delta\subset\C^*$ taken in clockwise order, then  
	\begin{equation*}
	Y_{\beta,r_1}(t)= Y_{\beta,r_2}(t) \cdot \prod_{Z(\gamma)\in\Delta}(1-\xi(\gamma)e^{-Z(\gamma)/t})^{\Omega(\gamma)\bra\beta,\gamma\ket} \quad \forall\ t\in\H_{r_1}\cap\H_{r_2};
	\end{equation*}
\end{enumerate}
and $RH_2$, $RH_3$ hold as in \ref{holoRHpb}, away from some $t=\frac{Z(\gamma)}{\th(\gamma)+2k\pi\ii}$, $\gamma\in\Gamma$, $k\in\Z$.
\end{pb}
Notice that we keep the same notation for conditions in \ref{meroRHpb} as in  \ref{holoRHpb}, although the domain is different. 

\begin{prop}\label{prop_mero_uniq}
The solution to the Problem \ref{meroRHpb} associated with a finite BPS structure, when exists, is unique if and only it the order of zero/pole at any critical point is specified.
\end{prop}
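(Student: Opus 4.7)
The plan is to reduce uniqueness to a Liouville-type argument applied to the ratio of two candidate solutions. Fix $\beta\in\Gamma$, and suppose $\{Y_{\beta,r}\}$ and $\{Y'_{\beta,r}\}$ are two solutions of \ref{meroRHpb} sharing the same order of zero or pole at every critical point $Z(\gamma)/(\th(\gamma)+2k\pi\ii)$. We form the meromorphic ratio $f_{\beta,r}:=Y_{\beta,r}/Y'_{\beta,r}$ on $\H_r$. Since $RH_1$ assigns the same Stokes factor to both $Y$ and $Y'$ across any convex sector $\Delta$ between adjacent generic rays $r_1,r_2$, this factor cancels in the ratio, and one finds $f_{\beta,r_1}=f_{\beta,r_2}$ on the full overlap $\H_{r_1}\cap\H_{r_2}$. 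Since the half-planes $\H_r$ cover $\C^*$ as $r$ varies over non-active rays, the local pieces glue to a single-valued meromorphic function $f_\beta$ on $\C^*$.

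By $RH_0$ each of $Y_{\beta,r}$ and $Y'_{\beta,r}$ has only finitely many zeros and poles, all at critical points and with identical orders by hypothesis, so $f_\beta$ is holomorphic and nowhere vanishing on $\C^*$. Condition $RH_2$ shows that $f_\beta$ extends holomorphically across $t=0$ with value $1$, while $RH_3$ forces polynomial growth at infinity; thus $f_\beta$ extends meromorphically to $\C\PP^1$. A rational function on $\C\PP^1$ without zeros or poles on $\C^*$ must be of the form $ct^n$, and the normalization $f_\beta(0)=1$ forces $c=1$ and $n=0$. Hence $f_\beta\equiv 1$ and $Y_{\beta,r}=Y'_{\beta,r}$ for every $\beta$ and $r$, giving the ``if'' direction.

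For the converse, starting from any solution $\{Y_{\beta,r}\}$ and any critical point $t_0=Z(\gamma)/(\th(\gamma)+2k\pi\ii)$, the family
\begin{equation*}
\widetilde{Y}_{\beta,r}(t):=(1-t/t_0)\cdot Y_{\beta,r}(t)
\end{equation*}
is again a solution of \ref{meroRHpb}: the extra factor is independent of $r$ and so cancels in $RH_1$, tends to $1$ as $t\to 0$, and has polynomial growth at infinity. However its order of zero at $t_0$ differs from that of $Y_{\beta,r}$ by $+1$, exhibiting non-uniqueness as soon as the orders at critical points are not part of the data.

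The only delicate point is the gluing: one must check that the local ratios $f_{\beta,r}$ agree on the entire intersection $\H_{r_1}\cap\H_{r_2}$ and not merely on the convex sector between $r_1$ and $r_2$, which however follows directly from the way $RH_1$ is phrased. The rest of the argument is essentially Liouville's theorem applied to the glued ratio.
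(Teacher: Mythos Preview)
Your argument is correct and is precisely the standard Liouville-type argument that the paper invokes: the paper's own proof merely points to \cite[Lemma~4.9]{bridg_RHpb} and says the result follows from a routine application of Liouville's theorem, which is exactly the ratio-and-gluing argument you have written out in detail. Your converse construction via multiplication by $(1-t/t_0)$ is also the natural way to exhibit non-uniqueness and is implicit in the paper's discussion following the proposition.
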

\begin{proof}
Fix a vanishing order of a finite number of points. The proof goes as in \cite[{Lemma 4.9}]{bridg_RHpb}, with minor modifications. The argument is a standard application of the Liouville theorem, see also \cite[{Chapter\,3}]{fokas_painleve} as an example. 
\end{proof}

\begin{definition} We say that a solution is minimal if its finitely many critical points (zeroes or poles) associated with any $\gamma$ (that is in position $\frac{Z(\gamma)}{\theta(\gamma)+2k\pi i}$, $k\in\Z$) are simple and lie on the same side of $\ell_{\gamma}$. \end{definition}
\noindent By Proposition \ref{prop_mero_uniq}, there exists at most one \lq\lq minimal\rq\rq\ solution to Problem \ref{meroRHpb}.

\begin{rmk}[{\cite[{Remark 4.10}]{bridg_RHpb}}] 
When $(\Gamma, \Omega, Z)$ is an integral convergent uncoupled BPS structure with trivial pairing $\bra-,-\ket$, it is convenient doubling it. The RH problem for an uncoupled convergent BPS structure and its double are equivalent. A vector $(\gamma, \bra-,\gamma\ket)\in\Gamma\oplus\Gamma^\vee$ is null. The formulation of the meromorphic problem \ref{meroRHpb} implies that $Y_{\beta,r}\equiv 1$ for null vectors $\beta$. Therefore $Y_{(\gamma, \bra-,\gamma\ket),r}\equiv 1$. Recall that the functions $Y_{(\gamma,\nu),r}$ are modelled on \eqref{defYbeta}, composing a map on the torus with a character $x_{(\gamma,\nu)} = (-1)^{\bra(\gamma,0),(0,\nu)\ket}x_{(\gamma,0)}\cdot x_{(0,\nu)}$ and are thus multiplicative in the index. This implies that $Y_{\gamma,r}\equiv Y_{\bra-,\gamma\ket,r}$  (note the abuse of notation). At the same time, the doubled solution carries more information.
\end{rmk}

\section{Complex analysis}\label{sec_complex_analysis}

In this section we introduce the complex multivalued function
	\begin{equation}\label{defSol}
	\mmg{x}{y}:=\frac{\Gamma(x+y)\cdot e^y}{y^{x+y-\frac{1}{2}}\cdot\sqrt{2\pi}}
	\end{equation}
defined for $x\in\C$ and $y\in\C^*$ as a modification of the Gamma function. We study a number of properties (Lemma \ref{lem_Sol_jump}) and we provide an integral expression for $\mmg{x}{y}$ (Lemma \ref{lem_jumpXX}) that justifies why it will define a basis of solutions to \ref{meroRHpb}. To this end we also introduce the function $S:\C\times\C\to\C$, symmetric with respect the exchange of the two variables,
\begin{equation}\label{S}
S(x,y)=S(y,x)=1-e^{x}e^{y}.
\end{equation}
Multivaluedness of \eqref{defSol} depeds on $\exp\left( \left(x+y-\frac{1}{2}\right)\ln y -\ln y\right)$. For a chosen branch of the logarithm, it is a meromorphic function in two variable with poles prescribed by $\Gamma(x+y)$ at points $(x,x+2\pi k\ii)$, $k\in\Z_{\leq 0}$. Later, we will consider $$\mg{x}{y}:=\mmg{x}{y},$$ regarded as a family of meromorphic functions defined for $y\in\C^*\setminus\Rp$, parametrised by a choice of $x=\ln e^x$, $k\pi\leq\Im(x)<(k+1)\pi$, and with aligned poles at $y=x+2\pi k \ii$, $k$ a negative integer.

\begin{lem}\label{lem_Sol_jump} \begin{enumerate}
\item Assume $\Im(x)>0$, then \begin{equation*}\mmg{x}{-y}\cdot \mmg{1-x}{y} = \begin{cases}
S(-2\pi\ii x,2\pi\ii y)^{-1} &\text{ if }\Im(y)>0 \\
S(2\pi\ii x,-2\pi\ii y)^{-1} &\text{ if }\Im(y)<0
\end{cases},\end{equation*}
\item $\mmg{1+x}{y}=\left(1+\frac{x}{y}\right)\mmg{x}{y}$,
\item $\Lambda_{x}^{-1}(y)$ has algebraic behaviour around the origin, i.e.\ there is $m\in \N$ such that $|y|^{m}<|\Lambda_{x}^{-1}(y)|<|y|^{-m}$, as $|y|\to 0$,
\item $\lim_{|y|\to\infty}\mg{x}{y}=1$.
	\end{enumerate}
\end{lem}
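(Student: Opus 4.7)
The four identities are essentially repackagings of classical Gamma function facts; the task is to read them through the definition \eqref{defSol} keeping careful track of branch choices.

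\textbf{Parts 2 and 3 first, as warm-ups.} Part 2 is immediate from the functional equation $\Gamma(z+1) = z\,\Gamma(z)$ applied to $z = x+y$: the factors of $e^{y}$, $\sqrt{2\pi}$ in the denominator cancel, and $y^{x+y+1/2}/y^{x+y-1/2} = y$, leaving $\Gamma(1+x+y)/\Gamma(x+y) \cdot 1/y = (x+y)/y = 1 + x/y$. For Part 3, I would write
\begin{equation*}
|\mmg{x}{y}^{-1}| \;=\; \frac{\sqrt{2\pi}\,|y|^{\Re(x+y-1/2)}\,e^{-\arg(y)\,\Im(x+y-1/2)}}{|\Gamma(x+y)|\,e^{\Re y}}.
\end{equation*}
On the half-plane $\H_r$ the argument $\arg y$ is bounded, $e^{-\Re y}$ and $e^{-\arg(y)\,\Im(x+y-1/2)}$ are bounded, and $\Gamma(x+y)\to\Gamma(x)\neq 0,\infty$ (the poles are excluded by the domain specified just before the lemma). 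The dominant factor is $|y|^{\Re(x)-1/2}$, so any integer $m\geq |\Re(x)-1/2|+1$ works.

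\textbf{Part 1 is the substantive identity.} Multiplying out the definitions gives
\begin{equation*}
\mmg{x}{-y}\cdot\mmg{1-x}{y} \;=\; \frac{\Gamma(x-y)\,\Gamma(1-(x-y))}{2\pi\,(-y)^{x-y-1/2}\,y^{-(x-y-1/2)}}.
\end{equation*}
Euler's reflection formula replaces the numerator by $\pi/\sin(\pi(x-y))$, so the product equals $\bigl[\,2\sin(\pi(x-y))\cdot((-y)/y)^{x-y-1/2}\,\bigr]^{-1}$. The quotient $(-y)/y$ is where the two cases split: with the principal branch of the logarithm, $\ln(-y)-\ln y = -i\pi$ if $\Im y>0$ and $+i\pi$ if $\Im y<0$, so $((-y)/y)^{x-y-1/2} = e^{\mp i\pi(x-y-1/2)} = \pm i\,e^{\mp i\pi(x-y)}$. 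In the upper case,
\begin{equation*}
2\sin(\pi(x-y))\cdot i\,e^{-i\pi(x-y)} \;=\; -i\bigl(e^{i\pi(x-y)}-e^{-i\pi(x-y)}\bigr)\cdot i\,e^{-i\pi(x-y)} \;=\; 1-e^{-2\pi i(x-y)},
\end{equation*}
which is exactly $S(-2\pi i x,\,2\pi i y)$; the lower case gives $S(2\pi i x,-2\pi i y)$ symmetrically. The assumption $\Im(x)>0$ is what ensures we stay off the poles/branch choices of the two $\Lambda$ factors simultaneously in each case.

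\textbf{Part 4 is Stirling.} I would apply Stirling's asymptotic expansion
\begin{equation*}
\log\Gamma(z) \;=\; \bigl(z-\tfrac{1}{2}\bigr)\log z - z + \tfrac{1}{2}\log(2\pi) + O(|z|^{-1})
\end{equation*}
valid uniformly in any closed subsector of $|\arg z|<\pi$, with $z=x+y$. This is legitimate because on the half-plane $\H_r$ with $r$ non-active (hence away from $\R_{<0}$ by the domain restriction before the lemma), $x+y$ lies in such a sector for $|y|$ large. Substituting yields
\begin{equation*}
\mg{x}{y} \;=\; \Bigl(1+\tfrac{x}{y}\Bigr)^{x+y-1/2} e^{-x}\bigl(1+O(|y|^{-1})\bigr),
\end{equation*}
and the principal branch of $(x+y-1/2)\log(1+x/y) = x + O(|y|^{-1})$ shows $(1+x/y)^{x+y-1/2}\to e^{x}$, so the limit is $1$.

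The only real obstacle is the bookkeeping in Part 1: making sure the branch of $y\mapsto y^{x+y-1/2}$ used in the definition of $\Lambda$ is compatible with the two sub-cases $\Im y \gtrless 0$, which is precisely why the statement is split into two formulas with $y\leftrightarrow -y$ exchanged inside $S$.
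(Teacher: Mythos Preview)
Your proof is correct and follows essentially the same route as the paper's: Euler reflection with branch bookkeeping for Part~1, the functional equation $\Gamma(z+1)=z\Gamma(z)$ for Part~2, a direct modulus estimate for Part~3, and Stirling for Part~4. The only cosmetic difference is that in Part~4 the paper invokes the shifted Stirling expansion $\ln\Gamma(z+a)=(z+a-\tfrac12)\log z - z + \ln\sqrt{2\pi}+O(z^{-1})$ directly, whereas you apply unshifted Stirling to $z=x+y$ and then expand $\log(1+x/y)$; the two are equivalent.
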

\begin{proof}
For \emph{1.}\ recall the Euler reflection formula for the Gamma function
	\begin{equation*}
	\Gamma(z)\Gamma(1-z) = \frac{\pi}{\sin(\pi z)} = \frac{2\pi\ii}{e^{\ii\pi z}-e^{-\ii\pi z}}.
	\end{equation*}
We write $y^{1-x+y-\frac{1}{2}}$ occurring in $\mmg{1-x}{y}$ as $$\exp\left( \left((-x+y+\frac{1}{2}\right)\ln y \right),$$ and $(-y)^{x-y}$ occurring in $\mmg{x}{-y}$ as 
	\begin{equation*}
	\exp\left( \left(x-y-\frac{1}{2}\right)\ln y \mp \pi\ii(x-y-\frac{1}{2}\right) \quad \text{ if } \pm\Im(y)>0.
	\end{equation*}
Then, if for instance $\Im(y)>0$, $\mmg{x}{y}\cdot \mmg{x}{-y}$ equals
	\begin{equation*}
	2\pi e^{\pi\ii/2} e^{-\pi\ii (x-y)} \frac{e^{\ii\pi (x-y)}-e^{-\ii\pi (x-y)}}{2\pi\ii}= 1-e^{-2\pi\ii(x-y)}.
	\end{equation*}
For \emph{2.} use the property $\Gamma(z)=z\Gamma(1+z)$.\\
\emph{3.}\ is clear as, for $|y|<\epsilon$, $\Lambda_{x}^{-1}(y)$ is bounded by a function that goes as a holomorphic function times $y^\epsilon$.\\
\emph{4.}\ follows from the following formula for the logarithm of the shifted Gamma function \cite[Chapter\,1.1]{magnus} valid for any $N\in\N$, $z,a\in\C$,
	\begin{align*}
	\ln\Gamma(z+a) = \Big(z+a-&\frac{1}{2}\Big) \log z - z + \ln(\sqrt{2\pi}) + \\
	&+\sum_{m=1}^N \frac{(-1)^{m+1} \cdot B_{m+1}(a)}{m(m+1)} \cdot z^{-m} + O(z^{-N-1/2}).
	\end{align*}
\end{proof}
\noindent One also deduces that the logarithm $\ln\mmg{x}{y}$ has formal asymptotic expansion
	\begin{equation*}
	\ln \mmg{x}{y} \sim \sum_{m=1}^{+\infty} \frac{(-1)^{m+1}B_{m+1}(x)}{m(m+1)} \cdot y^{-m},
	\end{equation*}
as $y$ lies in any convex sector of $\C^*$ not containing $\R_{<0}$.
\begin{rmk}
$\mmg{1}{y}=\mmg{0}{y}$ coincides with the modified Gamma function $\Lambda$ introduced in \cite[{Section 3.2}]{bridg_RHpb}. Compare also with \cite[{Eq.\ 3.10}]{gaiotto}.
\end{rmk}
	
$\mmg{x}{y}$ can be defined as the analytic continuation of an integral expression. For $(\th,w)\in\C^2$, $0<\Im(\th)<2\pi$ and $\Im(w)\neq 0$, we consider the function 
	\begin{equation*}
	\XXp(\th,w) := \exp \left( -\frap \int_{\Rp} \frac{\ln(1-s/w)}{e^{-\th+s}-1} d s - \frap \int_{-\Rp} \frac{\ln(1-s/w)}{e^{\th-s}-1} d s \right).
	\end{equation*}
It can be extended over $\Im(\th)>2\pi$ by
	\begin{equation*}
	\XXp(\th,w) = \XXp(\th-2\pi\ii,w) \cdot \left(1-\frac{\th}{w}\right).
	\end{equation*}
Lemmas \ref{lemTom} and \ref{lem_grad} are aimed to prove the following Theorem.
\begin{thm}\label{thmSolInt}
When $\Im(\th)>0$, $\Im(w)<0$, 
	\[\mmg{\frac{\th}{2\pi\ii}}{-\frac{w}{2\pi\ii}}=\XXp(\th-2\pi\ii,w).\] 
\end{thm}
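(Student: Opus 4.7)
The plan is to take logarithms of both sides and match them via Binet's second integral representation of $\ln\Gamma$. Setting $x = \theta/(2\pi\ii)$, $y = -w/(2\pi\ii)$, and $z = x + y = (\theta - w)/(2\pi\ii)$, the hypothesis $\Im\theta > 0$, $\Im w < 0$ gives $\Re z = (\Im\theta - \Im w)/(2\pi) > 0$, so Binet's formula
$$\ln\Gamma(z) = \bigl(z - \tfrac12\bigr)\ln z - z + \tfrac12 \ln(2\pi) + 2\int_0^\infty \frac{\arctan(t/z)}{e^{2\pi t} - 1}\, dt$$
applies. Substituting into $\ln\mmg{x}{y} = \ln\Gamma(z) + y - (z - \tfrac12)\ln y - \tfrac12 \ln(2\pi)$ and simplifying via $z/y = 1 - \theta/w$ collapses the algebraic part to
$$\ln\mmg{x}{y} = \bigl(z - \tfrac12\bigr)\ln\!\bigl(1 - \tfrac{\theta}{w}\bigr) - \tfrac{\theta}{2\pi\ii} + 2\int_0^\infty \frac{\arctan(t/z)}{e^{2\pi t} - 1}\, dt.$$

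Next I would convert the arctan integral into a contour integral matching the integrand defining $\XXp$. Using $\arctan u = \tfrac{1}{2\ii}[\ln(1 + \ii u) - \ln(1 - \ii u)]$ with $u = t/z$ and substituting $s = 2\pi t$, the identity $z \pm \ii t = (\theta - w \mp s)/(2\pi\ii)$ shows that the $\log(2\pi\ii)$ contributions cancel, giving
$$2\int_0^\infty \frac{\arctan(t/z)}{e^{2\pi t} - 1}\, dt = \frap \int_0^\infty \frac{\log(\theta - w - s) - \log(\theta - w + s)}{e^s - 1}\, ds,$$
with principal branch consistent along $s \in \Rp$ because $\Im(\theta - w) > 0$. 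On the right-hand side, since $e^{\pm 2\pi\ii} = 1$, the integrals defining $\XXp(\theta - 2\pi\ii, w)$ formally coincide with those in which $\theta - 2\pi\ii$ is replaced by $\theta$. Substituting $s \mapsto -s$ in the integral over $-\Rp$ and integrating each piece by parts, with primitives $\ln(1 - e^{\theta - s})$ and $\ln(1 - e^{-\theta - s})$ of $ds/(e^{\mp\theta + s} - 1)$ respectively, produces a single Cauchy-type integral $-\frap \int_\R F(s)/(w - s)\, ds$. The boundary terms vanish since $\ln(1 \mp s/w)$ is zero at $s = 0$ and the exponentials decay at $\pm\infty$.

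The final step matches this Cauchy integral with the arctan integral plus the algebraic prefactor $(z - \tfrac12)\ln(1 - \theta/w) - \theta/(2\pi\ii)$. The preparatory Lemmas \ref{lemTom} and \ref{lem_grad} invoked above should handle exactly this comparison, either by a contour deformation that rewrites $\log(\theta - w \mp s) = \log w + \log(1 \mp (\theta \mp s)/w) + \text{const}$, producing the prefactor from explicit branch corrections, or by showing that both sides share the same $\theta$-derivative together with a matching limit as $|w| \to \infty$ (where $\mmg \to 1$ by part (4) of Lemma \ref{lem_Sol_jump}, and $\XXp \to 1$ by dominated convergence on its defining integrand). The main obstacle is the careful branch-tracking for $\log(\theta - w \pm s)$ along $\R$ and the verification that integration-by-parts boundary contributions vanish; this is precisely where the hypotheses $\Im\theta > 0$, $\Im w < 0$ enter, guaranteeing both $\Re z > 0$ (so Binet applies) and that no poles of the $\XXp$-integrand reach the real axis.
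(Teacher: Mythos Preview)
Your approach—applying Binet directly at $z = (\theta - w)/(2\pi\ii)$ and then matching the arctan integral against the $\XXp$ integrand—differs from the paper's, which invokes Binet only at $\theta = 0$ (this is precisely Lemma~\ref{lemTom}) and then transports the identity to general $\theta$ by integrating the first-order PDE of Lemma~\ref{lem_grad} along the diagonal path $x \mapsto (x, w - \theta + x)$. So the two lemmas you appeal to in your final paragraph do not play the role you suggest: Lemma~\ref{lemTom} is the $\theta = 0$ base case, not a branch-tracking device, and Lemma~\ref{lem_grad} is a derivative identity used for transport, not for asymptotic matching.

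More seriously, there is a genuine error. You assert that since $e^{\pm 2\pi\ii} = 1$ the integrals defining $\XXp(\theta - 2\pi\ii, w)$ coincide with those for $\XXp(\theta, w)$. The integrand is indeed $2\pi\ii$-periodic in $\theta$, but $\XXp(\theta - 2\pi\ii, w)$ in the theorem is \emph{not} the naive periodic value: it is determined by the recursion $\XXp(\theta, w) = \XXp(\theta - 2\pi\ii, w)\cdot(1 - \theta/w)$ displayed just before the theorem (equivalently, by analytic continuation across the line $\Im\theta \in 2\pi\Z$, where a pole of the integrand crosses the real contour and contributes a residue). The paper's own computation makes this explicit: for $0 < \Im\theta < 2\pi$ one obtains $\XXp(\theta, w) = \mmg{\theta/(2\pi\ii)}{-w/(2\pi\ii)}\cdot(1 - \theta/w)$, and Lemma~\ref{lem_Sol_jump}(2) then absorbs the extra factor into the $2\pi\ii$ shift. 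Your periodicity shortcut drops exactly this factor, which is why the algebraic prefactor $(z - \tfrac12)\ln(1 - \theta/w) - \theta/(2\pi\ii)$ in your last step never gets cleanly accounted for and the argument remains unfinished.
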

The shift of $2\pi\ii$ in the Theorem above is essentially related with different choices of the branch of the logarithm.
\begin{lem}\label{lemTom}  When $\Im(w)<0$,
	\begin{equation*}
	\XXp(0,w) = \mmg{0}{-\frac{w}{2\pi\ii}}.
	\end{equation*}
\end{lem}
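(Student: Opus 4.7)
My plan is to reduce both sides to the same explicit integral using the second Binet formula for $\ln\Gamma$.

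First, specialize the definition of $\XXp$ at $\theta=0$: the two denominators become $e^s-1$ and $e^{-s}-1$, so
\[
\ln\XXp(0,w)=-\frap\int_{\Rp}\frac{\ln(1-s/w)}{e^s-1}\,ds-\frap\int_{-\Rp}\frac{\ln(1-s/w)}{e^{-s}-1}\,ds.
\]
In the second integral substitute $s\mapsto -s$; since $\Im(w)<0$, both $1-s/w$ and $1+s/w$ stay off the negative real axis for $s>0$, so the principal branch of the logarithm is unambiguous and the two pieces combine to
\[
\ln\XXp(0,w)=\frap\int_0^\infty\frac{\ln(1+s/w)-\ln(1-s/w)}{e^s-1}\,ds.
\]

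Next I compute $\ln\mmg{0}{y}$ for $y=-w/(2\pi\ii)$. Because $\Im(w)<0$, we have $\Re(y)=\Re(\ii w/(2\pi))>0$, and Binet's second integral formula applies:
\[
\ln\Gamma(y)=\bigl(y-\tfrac12\bigr)\ln y - y + \tfrac12\ln(2\pi)+2\int_0^\infty\frac{\arctan(t/y)}{e^{2\pi t}-1}\,dt.
\]
Subtracting the polynomial-type terms in the definition \eqref{defSol} gives $\ln\mmg{0}{y}=2\int_0^\infty\frac{\arctan(t/y)}{e^{2\pi t}-1}\,dt$. Now I rescale $t=s/(2\pi)$ to match denominators, yielding
\[
\ln\mmg{0}{y}=\frac{1}{\pi}\int_0^\infty\frac{\arctan(s/(2\pi y))}{e^s-1}\,ds.
\]

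Finally I convert the $\arctan$ to a logarithm. Since $2\pi y=\ii w$, we have $s/(2\pi y)=-\ii s/w$, and the standard identity $\arctan(z)=\frac{1}{2\ii}\ln\frac{1+\ii z}{1-\ii z}$ gives $\arctan(-\ii s/w)=\frac{1}{2\ii}\ln\frac{1+s/w}{1-s/w}$, where the hypothesis $\Im(w)<0$ keeps the argument of $\ln$ inside the domain of the principal branch for all $s>0$. Plugging in yields
\[
\ln\mmg{0}{-w/(2\pi\ii)}=\frap\int_0^\infty\frac{\ln(1+s/w)-\ln(1-s/w)}{e^s-1}\,ds,
\]
which coincides with the integral obtained for $\ln\XXp(0,w)$. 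The main thing to watch is the branch analysis: one must check that $\Re(-w/(2\pi\ii))>0$ so Binet's formula is valid, and that $1\pm s/w$ never crosses $\R_{\leq 0}$ for $s>0$ under $\Im(w)<0$, so that all principal logarithms used are consistent. Both verifications are immediate from the sign of $\Im(w)$, and the lemma follows.
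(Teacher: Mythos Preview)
Your proof is correct and follows essentially the same route as the paper: both arguments reduce to Binet's second integral formula for $\ln\Gamma$ and then use the identity $\arctan(z)=\tfrac{1}{2\ii}\ln\tfrac{1+\ii z}{1-\ii z}$ to match the integrand of $\XXp(0,w)$. Your version is simply more explicit about the substitution $s\mapsto -s$ that merges the two rays and the rescaling $t=s/(2\pi)$, as well as the branch checks, but there is no substantive difference.
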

\begin{proof}
To verify the equation we use Binet's second formula \cite[{section\,12.32}]{whittaker}, valid for $\Re(\ii w)>0$, expressing  
	\begin{equation*}
	\ln\Gamma\Big(\frac{\ii w}{2\pi} \Big) + (\ii w) - \frac{1}{2} \ln(2\pi) - \left(\ii w-\frac{1}{2}\right) \ln (\ii w)
	\end{equation*}
as
	\begin{equation*}
	2\int_{0}^{+\infty} \arctan(2\pi s/(\ii w)) \frac{1}{e^{2\pi s}-1} ds.
	\end{equation*}
Since $\arctan(z)= -\frac{1}{2\ii} \left[ \ln(1-\ii z) - \ln(1+\ii z)\right]$ in $\C^*\setminus \big(\pm[\ii,\ii\infty[\big)$, the thesis follows.
\end{proof}
\begin{lem}\label{lem_grad} 
In the domain of definition of $\ln\XXp(\th,w)$, 
	\[\left(\frac{\del}{\del\th}+\frac{\del}{\del w}\right) \ln\XXp(\th,w)  = -\frac{1}{2\pi\ii}\left(\th+\pi\ii\right) w^{-1}.\]
\end{lem}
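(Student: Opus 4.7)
The plan is to compute the directional derivative $(\partial_\theta + \partial_w) \ln \XXp$ directly from the integral representation of $\ln\XXp$. First I would differentiate under the integral sign with respect to $\theta$, exploiting the key identity $\partial_\theta (e^{\pm(\theta-s)}-1)^{-1} = -\partial_s (e^{\pm(\theta-s)}-1)^{-1}$, valid for both kernels appearing in the definition. An integration by parts then transfers the $s$-derivative onto $\ln(1-s/w)$, whose derivative equals $-1/(w-s)$. The boundary contributions vanish because $\ln(1-s/w)$ equals $0$ at $s=0$ and grows only logarithmically at infinity against the exponential decay of the kernel, so one obtains $\partial_\theta\ln\XXp$ as a sum of two integrals of the form $\int \frac{ds}{(w-s)(e^{\pm(s-\theta)}-1)}$.

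Next I would compute $\partial_w \ln \XXp$ directly from $\partial_w \ln(1-s/w) = s/(w(w-s))$, which yields analogous integrals weighted by $-s/(w(w-s))$. Adding the $\theta$- and $w$-derivatives, the algebraic identity
\[\frac{1}{w-s}-\frac{s}{w(w-s)}=\frac{w-s}{w(w-s)}=\frac{1}{w}\]
causes the factors $(w-s)$ to cancel in both integrands, and I am left with the considerably simpler expression
\[\left(\frac{\del}{\del\theta}+\frac{\del}{\del w}\right)\ln\XXp(\theta,w)=\frac{1}{2\pi\ii w}\left[\int_0^{+\infty}\frac{ds}{e^{s-\theta}-1}+\int_{-\infty}^0\frac{ds}{e^{\theta-s}-1}\right].\]

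The final step is to evaluate these two elementary integrals. Substituting $u=e^{s-\theta}$ in the first and $u=e^{\theta-s}$ in the second turns each into $\int du/(u(u-1))$ along a ray in $\C^\ast$ whose argument is determined by $\Im\theta\in(0,2\pi)$. The antiderivative is $\ln((u-1)/u)$, and after combining the two pieces and carefully tracking the continuous branches of the logarithms along the integration contours, the sum evaluates to $-(\theta+\pi\ii)$. Dividing by $2\pi\ii w$ yields the claimed formula.

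The main obstacle is precisely this last branch-tracking step: each logarithmic antiderivative is defined only up to $2\pi\ii$, and one must verify that the branches induced by the substitutions combine to produce the shift $+\pi\ii$ (rather than $-\pi\ii$ or $0$). A more robust route to pin this down is to combine the simplified expression with the functional equation $\XXp(\theta+2\pi\ii,w) = \XXp(\theta,w)\cdot(1-(\theta+2\pi\ii)/w)$ satisfied by the analytic continuation of $\XXp$: this forces $(\partial_\theta+\partial_w)\ln\XXp$ to jump by $-1/w$ under $\theta\mapsto\theta+2\pi\ii$, which is consistent with the linear dependence on $\theta$ in the claimed formula and determines the additive $\pi\ii$ uniquely once the value at one base point (accessible via Lemma \ref{lemTom}) is fixed.
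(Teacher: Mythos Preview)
Your approach is essentially the paper's: both compute $\partial_\theta$ and $\partial_w$ separately, use integration by parts to bring the $\theta$-derivative onto $\ln(1-s/w)$, and exploit the same cancellation $\tfrac{1}{w-s}-\tfrac{s}{w(w-s)}=\tfrac{1}{w}$ to reduce to two elementary integrals. The only organizational difference is that the paper performs the integration by parts once at the outset, rewriting $\ln\XXp$ as $-\frap\int_{\Rp}\frac{\ln(1-e^{\theta-s})}{w-s}\,ds+\frap\int_{-\Rp}\frac{\ln(1-e^{-\theta+s})}{w-s}\,ds$, and then differentiates that; you differentiate first and integrate by parts inside the $\partial_\theta$ computation. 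Same result.

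Two small points. First, watch the orientation of your second integral: with the paper's convention $\int_{-\Rp}=\int_0^{-\infty}$, the simplified expression should read $\int_0^{-\infty}\frac{ds}{e^{\theta-s}-1}$ rather than $\int_{-\infty}^0$; the sign matters for the final evaluation. Second, the paper avoids your substitution $u=e^{s-\theta}$ entirely: it recognises the antiderivatives as $\mp\ln(1-e^{\pm(\theta-s)})$, evaluates at the endpoints, and obtains directly
\[
-\frac{w^{-1}}{2\pi\ii}\bigl(\ln(1-e^{\theta})-\ln(1-e^{-\theta})\bigr)=-\frac{w^{-1}}{2\pi\ii}(\theta+\pi\ii),
\]
the last identity holding for $\Im\theta>0$ with the principal branch. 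This is cleaner than tracking branches through the change of variables, and it makes your fallback via the functional equation and Lemma~\ref{lemTom} unnecessary---though that route would also work.
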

\begin{proof}
Integrating by parts we have
	\[\ln\XXp(\theta,w)= -\frac{1}{2\pi i} \int_{\R_{>0}} \frac{\ln(1-e^{\theta-s})}{w-s} ds + \frac{1}{2\pi i} \int_{-\R_{>0}} \frac{\ln(1-e^{-\theta+s})}{w-s} ds
	\]
and hence
	\begin{equation*}
	 \deld{\th}\ln\XXp=-\frap\int_0^{\infty} \frac{ w^{-1} d s}{(1-e^{s-\th})(1-sw^{-1})} -\frap \int_0^{-\infty} \frac{w^{-1}d s}{(1-e^{\th-s})(1-sw^{-1})}.
	\end{equation*}
On the other hand $\deld{w}\ln\XXp=-w^{-2}\deld{w^{-1}}\ln\XXp$ and
	\begin{equation*}
	-w^{-2}\deld{w^{-1}}\ln\XXp=-\frap\int_0^{+\infty} \frac{-sw^{-2} d s}{(1-e^{s-\th})(1-sw^{-1})} - \frap\int_0^{-\infty} \frac{-sw^{-2} d s}{(1-e^{\th-s})(1-sw^{-1})}.
	\end{equation*}
Therefore
	\begin{equation*}
	 \left(\frac{\del}{\del\th}+\frac{\del}{\del w}\right) \ln \XXp = -\frac{w^{-1}}{2\pi\ii} \left( \ln(1-e^{\th}) - \ln(1-e^{-\th}) \right) = -\frac{w^{-1}}{2\pi\ii} \left(\th+\pi\ii\right),
	\end{equation*}
as $\Im(\th)>0$.
\end{proof}
\begin{proof}[{Proof of Theorem \ref{thmSolInt}}]
By Lemma \ref{lem_grad}, we can transport $\XXp(0,w-\th)$ along diagonal directions. Let $0<\Im(\th)<2\pi$ and $\Im(w)<0$.
	\begin{equation*}
	\ln\XXp(\th,w)= \ln\XXp(0,w-\th) + \int_{\eta}d\ln\XX,
	\end{equation*}
where $\eta$ is the path $\eta(x)=(x,w-\th+x)$, $x\in[0,\th]$. 
	\begin{equation*}
	\int_{\eta}d\ln\XXp= -\int_{-\th+w}^w \frap\left(x'-w+\th+\pi\ii\right)(x')^{-1} dx'
= -\frap \left(\th + (\th-w+\pi\ii)\ln \left(\frac{w}{w-\th}\right)\right).
\end{equation*}
Then
	\begin{equation}\label{LambdaPi}
	\XXp(\th,w) = \Lambda\left(0,-\frac{w-\theta}{2\pi\ii}\right)^{-1}\cdot e^{-\frac{\th}{2\pi\ii}} \cdot\left(\frac{w}{w-\th}\right)^{\frac{\th-w}{2\pi\ii}-\frac{1}{2}}.
	\end{equation}
We write $\left(\frac{w}{w-\th}\right)^{-\frac{1}{2}}$ as $\left(\frac{w}{w-\th}\right)^{\frac{1}{2}}\left(1-\frac{\th}{w}\right)$. Manipulating the equation \eqref{LambdaPi}, one obtains 
	\begin{equation*}
	\XXp(\th,w) =  \frac{\Gamma\left(-\frac{w-\th}{2\pi\ii}\right)\cdot e^{-\frac{w}{2\pi\ii}}}{\sqrt{2\pi} \cdot \left(-\frac{w}{2\pi\ii}\right)^{-\frac{w-\th}{2\pi\ii}-\frac{1}{2}}}\left(1-\frac{\th}{w}\right).
	\end{equation*}
The thesis follows from \emph{2.}\ in Lemma \ref{lem_Sol_jump}.
\end{proof}
The function $\XXp$ has the form of a classical solution to a Riemann-Hilbert problem. Such an integral expression is the basis solution for an analogous Riemann-Hilbert problem considered in \cite{FGS}, section \ref{sec_FGS}. We can look at $\XXp$ as a piecewise function in $0<|\Im(\th)|<2\pi$, $\Im(w)\neq 0$, satisfying the symmetry
	\begin{equation}\label{eq_symmXXp}
	\XXp(\th,w) = \XXp(-\th,-w)^{-1},
	\end{equation}
and with discontinuities prescribed by $S$. This can be shown via a direct integral contour argument.
\begin{lem}\label{lem_jumpXX} Denote by $\XXp(\th,w_0^{\pm})$ the limits of $\XXp(\th,w)$ as $w$ approaches a point $w_0$ clock-wise and counter-clock-wise respectively. Then if $w_0\in\Rp$,
	\begin{equation*}
	\XXp(\th,w_0^-) = \XXp(\th,w_0^+) \cdot S(\th,-w_0),
	\end{equation*}
while if $v_0\in\Rm$ 
	\begin{equation*}
	\XXp(\th,v_0^-) = \XXp(\th,v_0^+) \cdot S(-\th,v_0)^{-1}.
	\end{equation*}
\end{lem}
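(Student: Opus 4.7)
The plan is to establish the jump formula by a direct Sokhotski--Plemelj-style computation on the integral definition of $\XXp$, isolating which of the two integrals is responsible for the branch cut and integrating out the singular part explicitly.

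First I would observe that the two integrals defining $\ln\XXp(\th,w)$ are singular in the logarithm only where $1-s/w$ meets the negative real axis. For $w=w_0\in\Rp$, the only contribution comes from the first integral $\int_{\Rp}$, because for $s\in -\Rp$ and $w$ near a positive real number the quantity $1-s/w$ stays away from $\Rm$. Conversely, for $v_0\in\Rm$ only the second integral produces a jump, with $1-s/w$ becoming real and negative precisely when $s<v_0$. Thus the problem reduces in each case to a single one-dimensional integral whose integrand has a tractable jump.

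Next I would compute the jump of $\ln(1-s/w)$ across the real axis using the principal branch. For $w_0\in\Rp$ and $s>w_0$, the quantity $1-s/w$ lies in a small neighbourhood of a negative real number; as $w$ approaches $w_0$ from the two sides, $\arg(1-s/w)$ tends to $+\pi$ from one side and to $-\pi$ from the other, giving
\begin{equation*}
\ln(1-s/w_0^-)-\ln(1-s/w_0^+) = \pm 2\pi\ii \cdot \mathbf{1}_{\{s>w_0\}}
\end{equation*}
with the sign dictated by the clockwise/counter-clockwise convention and by the principal branch. Substituting this jump into the first integral collapses $\ln\XXp(\th,w_0^-)-\ln\XXp(\th,w_0^+)$ to
\begin{equation*}
\mp \int_{w_0}^{\infty} \frac{d s}{e^{s-\th}-1},
\end{equation*}
which is elementary: the antiderivative is $\ln(1-e^{\th-s})$, and evaluating between $s=w_0$ and $s=\infty$ yields $\pm\ln(1-e^{\th-w_0})$. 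Exponentiating and recognising $1-e^{\th-w_0}=S(\th,-w_0)$ gives the first claimed identity, up to inverting the factor.

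An entirely parallel computation handles $v_0\in\Rm$, with the cut now sitting on $(-\infty,v_0)\subset -\Rp$ and the integrand $1/(e^{\th-s}-1)$ producing after the change of variables $u=\th-s$ the antiderivative $\ln(1-e^{u-\th})$; evaluation at the endpoints yields $1-e^{v_0-\th}=S(-\th,v_0)$, and the opposite sense in which $w$ winds past $\Rm$ (compared to $\Rp$) accounts for the inverse in the stated formula.

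The step I expect to require the most care is the sign bookkeeping: one must match the principal-branch convention chosen for $\ln(y)$ in the definition of $\mmg{x}{y}$ with the orientation convention ``clockwise vs.\ counter-clockwise'' adopted for $w_0^{\pm}$, and verify that the final signs are consistent with the identity $\XXp(\th,w)=\XXp(-\th,-w)^{-1}$ from \eqref{eq_symmXXp}. Once this is done, the two cases $w_0\in\Rp$ and $v_0\in\Rm$ are symmetric enough that the computation done for one can be transported to the other via this symmetry, providing a useful internal check on signs.
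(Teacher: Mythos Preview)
Your approach is correct and essentially the same as the paper's: both isolate the first integral as the sole contributor for $w_0\in\Rp$, compute the $2\pi\ii$ jump of $\ln(1-s/w)$ along $s>w_0$, reduce to $\int_{w_0}^{\infty}\frac{ds}{e^{s-\th}-1}=\ln(1-e^{\th-w_0})$, and then invoke the symmetry \eqref{eq_symmXXp} for the $v_0\in\Rm$ case. The only cosmetic difference is that the paper packages the jump computation as a contour integral over $\CC=\CC_+\cup\CC_\delta\cup\CC_-$ hugging the cut, whereas you phrase it directly as a Sokhotski--Plemelj discontinuity; the content is identical, and your caveat about careful sign bookkeeping is apt.
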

\begin{proof} Assume $w_0\in\Rp$.  Computing $\XXp(\th,w_0^{+})$ is equivalent to slightly deform the integral path $\Rp$ clock-wise in the lower half-plane and evaluate the function in $w_0$. We define the contour $\CC$: 
$$\CC = \CC_+ \cup \CC_{\delta} \cup \CC_-,$$ where $\CC_+=[w_0,\infty]\times\ii\e$, with inverse orientation, $\CC_{\delta}$ is a half-cycle of radius $\delta>0$ centred on $w_0$, and $\CC_-=[w_0,\infty]\times(-\ii\e)$ with standard orientation, $\e>0$. Then
\begin{equation}\label{contourint}
 \ln \XXp(\th,w_0^-) - \ln \XXp(\th,w_0^+)= \frac{1}{2\pi\ii} \int_{\CC} \frac{\ln\left(1-\left(s/ w\right)\right)}{e^{-\th+s} -1} d s,
\end{equation}
at the limit for $\e,\delta\to 0$. In \eqref{contourint}, the contribution from $\CC_{\delta}$ vanishes as $\delta\to 0$. $1-\frac{s}{w_0}\in\Rm$ if $s$ is real and has positive (resp.\ negative) imaginary part if $s\in\CC_+$ (resp.\ $\CC_-$). The contributions from $\CC_-$ and $\CC_+$ differ by $\frac{1}{2\pi\ii}\int_{w_0}^{\infty} \frac{-2\pi\ii}{e^{-\th} e^{ s}-1} d s$, that is $\ln(1-e^{\th-w_0})$, from which the thesis follows. The statement for $v_0\in\Rm$ can be deduced using \eqref{eq_symmXXp}.
\end{proof}

\section{Solution to the RH problem for uncoupled structures}\label{sec_solutions}

\subsection{Doubled $A_1$ BPS structure}\label{sec_explsol}
We first consider the simplest case, that is the doubled $A_1$ BPS structure with two active classes $\pm\alpha$.

\begin{exm}\label{exm_A1}
The doubled $A_1$ BPS structure is the datum of
\begin{itemize}
\item a lattice $\Gamma=\Z\cdot \alpha \oplus \Z\cdot \alpha^{\vee}$, endowed with the skew-symmetric non-degenerate pairing \eqref{doubled_pairing} and such that $\bra\alpha,\alpha^{\vee}\ket = 1$;
\item a group homomorphism $Z:\Gamma\to\C$ with $Z(\alpha^{\vee})=0$;
\item a map $\Omega:\Gamma\to\Z$ with 
	\begin{equation*}
	\Omega(\beta)=\begin{cases} 1 \text{ if }\beta=\pm(\alpha,0) \\
	0 \text{ otherwise}
	\end{cases}.
	\end{equation*}
\end{itemize}
\end{exm}
Fix $\xi=e^{\th}\in\torus$. We define $\ell:=Z(\alpha)\cdot\Rp$ one of the two opposite active rays, $\th:=\th(\alpha)$, $z:=Z(\alpha)$. The Stokes factors $\stokes(\beta,\pm\ell)=\left(1-\xi(\pm\alpha)e^{Z(\pm\alpha)/t}\right)^{\bra\beta,\alpha\ket}$ are trivial when $\beta\in\Z\cdot\alpha$. This implies that $Y_{\alpha,r}\equiv 1$. The solutions $\{Y_{\alpha^{\vee},r}\}_r$ to the meromorphic Riemann-Hilbert problem glue together to two meromorphic functions satisfying the following problem, \cite{bridg_RHpb}.
\begin{pb}\label{RHA1} Find two meromorphic functions
	\begin{equation*}
	Y_{\pm}:\C^*\setminus\pm\ii\ell\to\C\PP^1
	\end{equation*}
such that
\begin{enumerate}\setcounter{enumi}{-1}
\item $Y_{\pm}$ are holomorphic and nonvanishing away from zeroes/poles at $\frac{z}{\th(\alpha)\pm 2k\pi\ii}$, $k\in\N$,
\item $Y_{\pm}$ satisfy
	\begin{equation*}
	Y_+(t) = \begin{cases}
	 Y_-(t) \cdot (1-\xi(\alpha) e^{-z/t}) \quad \text{for } t\in\H_{\ell} \\
	Y_-(t) \cdot (1-\xi(-\alpha) e^{z/t}) \quad \text{for } t\in\H_{-\ell}
	\end{cases},
	\end{equation*}
\item $\lim_{t\to 0}Y_{\pm}(t) = 1$,
\item $Y_{\pm}(t)$ has at most algebraic growth when $|t|\gg 0$.
\end{enumerate}
\end{pb}
\begin{prop}\label{prop_xpm_A1}  
	\begin{equation*}
	Y_-(t):=\Lambda_{\frac{\th}{2\pi i}}\left(-\frac{z}{2\pi i t}\right) \quad \text{and} \quad Y_+(t):=\Lambda_{1-\frac{\th}{2\pi i}}^{-1}\left(\frac{z}{2\pi i t}\right)
	\end{equation*}
solve the Problem \ref{RHA1}.
\end{prop}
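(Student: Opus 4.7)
The plan is to verify directly each of the four conditions of Problem \ref{RHA1} by substituting $x := \th/(2\pi\ii)$ and $y := z/(2\pi\ii t)$ and invoking the corresponding items of Lemma \ref{lem_Sol_jump}.

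For condition $(RH_0)$ I would read off the critical points of $Y_-$ from the $\Gamma$-factor inside $\Lambda_x(-y)$: poles occur exactly when $x - y \in \Z_{\leq 0}$, i.e., at $t = z/(\th + 2k\pi\ii)$, $k\in\N$. Zeros of $Y_+ = \Lambda_{1-x}^{-1}(y)$ sit at the poles of its reciprocal, i.e., at $t = z/(\th - 2k\pi\ii)$, $k\geq 1$. Both sets fit the pattern prescribed by $(RH_0)$.

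The jump condition $(RH_1)$ is the heart of the argument and is essentially Lemma \ref{lem_Sol_jump}.1. For $t\in\H_\ell$, writing $t=zw$ with $\Re(w)>0$ gives $\Im(y) = \Im(1/(2\pi\ii w)) < 0$; the second case of Lemma \ref{lem_Sol_jump}.1 then yields
\begin{equation*}
Y_-(t)\cdot Y_+(t)^{-1} = \Lambda(x,-y)\,\Lambda(1-x,y) = S(2\pi\ii x,-2\pi\ii y)^{-1} = (1-\xi(\alpha)e^{-z/t})^{-1},
\end{equation*}
which rearranges into the first line of $(RH_1)$. For $t\in\H_{-\ell}$ the sign of $\Im(y)$ flips, the first case of Lemma \ref{lem_Sol_jump}.1 applies, and an analogous computation gives $\Lambda(x,-y)\Lambda(1-x,y) = S(-\th, z/t)^{-1}$, recovering the second line.

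Conditions $(RH_2)$ and $(RH_3)$ are then immediate: as $t\to 0$, $|y|\to\infty$ and Lemma \ref{lem_Sol_jump}.4 forces $Y_\pm(t)\to 1$; as $|t|\to\infty$, $|y|\to 0$ and Lemma \ref{lem_Sol_jump}.3 gives algebraic bounds on $\Lambda_x^{-1}$, which by reciprocation transfer to matching bounds on $\Lambda_x$. The only genuine subtlety I foresee is the hypothesis $\Im(x)>0$ in Lemma \ref{lem_Sol_jump}.1, corresponding to $\Re(\th)>0$; for general $\th$ I would extend the identity by analytic continuation in $\th$, both sides being meromorphic in this parameter. One must also be careful to match the branch of $\Lambda$ intended in the statement of the proposition with the one used in the lemma, which accounts for the particular symmetric placement of the $x$ and $1-x$ arguments.
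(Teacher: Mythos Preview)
Your argument is correct and matches the paper's in spirit: both reduce the four conditions of Problem~\ref{RHA1} to the properties of $\Lambda$ established in Section~\ref{sec_complex_analysis}, after the observation that $t\in\H_{\pm\ell}$ is equivalent to $\Im\bigl(z/(2\pi\ii t)\bigr)\lessgtr 0$. The only difference is in how the jump $RH_1$ is obtained: the paper routes it through the integral representation, invoking Theorem~\ref{thmSolInt} to identify $\Lambda$ with $\XXp$ and then Lemma~\ref{lem_jumpXX} for the discontinuity of $\XXp$ across the real axis, whereas you extract the same jump directly from the reflection identity in Lemma~\ref{lem_Sol_jump}.1. Your route is slightly more self-contained; the paper's has the advantage that the domains of definition $\C^*\setminus\pm\ii\ell$ and the branch choices are transparent from the integral. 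One small slip: with $x=\th/(2\pi\ii)$ one has $\Im(x)=-\Re(\th)/(2\pi)$, so the hypothesis $\Im(x)>0$ corresponds to $\Re(\th)<0$, not $\Re(\th)>0$; your proposed fix by analytic continuation in $\th$ is unaffected.
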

\begin{proof}
Observe that $t\in\ell$ if and only if $z/t\in\Rp$ and $t\in\H_{\ell}$ if and only if $\Im\left(\frac{z}{2\pi\ii t}\right)<0$. The Theorem follows from Theorems \ref{lem_Sol_jump}, \ref{thmSolInt} and Lemma \ref{lem_jumpXX}, as $\xi(-\alpha)=\xi(\alpha)^{-1}$. 
\end{proof}
$Y_+$ and $Y_-$ are respectively a holomorphic function with simple zeroes at points $\frac{z}{\th+2m\pi\ii}$, $m\in\N$, and a meromorphic function with simple poles at $\frac{z}{\th-2m\pi\ii}$, $m\in\N\setminus\{0\}$. Points $\frac{z}{\th+ 2k\pi\ii}$, $k\in\Z$, lie in a circle divided in two halves by $\ell$ or $-\ell$, and cluster at the origin. The circle degenerates to the origin if $\theta=0$. Every half-plane $\H_{\pm r}$, $r\neq\pm\ell$, contains then at most a finite number of those points. 
\begin{center}
\begin{tikzpicture}[rotate=20]
\filldraw[white,fill=red!10] (0,0) -- (1.4,0) arc (0:120:1.4) -- (0,0);
\filldraw[white,fill=red!10] (0,0) -- (1.4,0) arc (0:-60:1.4) -- (0,0);
\draw [thick] (0:0)--(0:2) node[above, right] {$\ell$};
\draw [thick] (0:0)--(0:-2) node[below, left] {$-\ell$};
\draw (90:0)--(90:1.7);
\draw [dashed] (90:0)--(90:-1.7);
\draw (180:-1.5)--(180:2);
\draw [thick,red] (30:0)--(30:1.7) node[above] {$r$};
\draw [ultra thin, dashed, domain=20:180] plot ({0.6+0.6*cos(\x)}, {0.6*sin(\x)});
\draw [red] (120:-1.7)--(120:1.7);
\draw [thick, dotted, domain=180:230] plot ({0.6+0.6*cos(\x)}, {0.6*sin(\x)});
\draw [red] ({0.6+0.6*cos(-30)}, {0.6*sin(-30)}) circle [radius=0.04];
\draw [red] ({0.6+0.6*cos(-55)}, {0.6*sin(-55)}) circle [radius=0.03];
\draw [red] ({0.6+0.6*cos(-75)}, {0.6*sin(-75)}) circle [radius=0.03];
\draw ({0.6+0.6*cos(235)}, {0.6*sin(235)}) circle [radius=0.015];
\draw [red] ({0.6+0.6*cos(245)}, {0.6*sin(245)}) circle [radius=0.02];
\draw [red] ({0.6+0.6*cos(255)}, {0.6*sin(255)}) circle [radius=0.025];
\draw [red] ({0.6+0.6*cos(268)}, {0.6*sin(268)}) circle [radius=0.025];
\end{tikzpicture}
\quad\quad\quad
\begin{tikzpicture}[rotate=20]
\filldraw[white,fill=red!10] (0,0) -- (1.4,0) arc (0:60:1.4) -- (0,0);
\filldraw[white,fill=red!10] (0,0) -- (1.4,0) arc (0:-120:1.4) -- (0,0);
\draw [ultra thin, dashed, domain=180:340] plot ({0.6+0.6*cos(\x)}, {0.6*sin(\x)});
\draw [thick] (0:0)--(0:2) node[above, right] {$\ell$};
\draw [thick] (0:0)--(0:-2) node[below, left] {$-\ell$};
\draw [dashed] (90:0)--(90:1.7);
\draw (90:0)--(90:-1.7);
\draw (180:-1.5)--(180:2);
\draw [thick,red] (-30:0)--(-30:1.7) node[above] {$r$};
\draw [red] (60:-1.7)--(60:1.7);
\draw [thick, dotted, domain=130:180] plot ({0.6+0.6*cos(\x)}, {0.6*sin(\x)});
\draw [red] ({0.6+0.6*cos(30)}, {0.6*sin(30)}) node[cross=1.5pt, red] {};
\draw [red] ({0.6+0.6*cos(55)}, {0.6*sin(55)}) node[cross=1.4pt, red] {};
\draw [red] ({0.6+0.6*cos(75)}, {0.6*sin(75)}) node[cross=1.4pt, red] {};
\draw [red] ({0.6+0.6*cos(92)}, {0.6*sin(92)}) node[cross=1.2pt, red] {};
\draw [red] ({0.6+0.6*cos(105)}, {0.6*sin(105)}) node[cross=1.2pt, red] {};
\draw [red] ({0.6+0.6*cos(115)}, {0.6*sin(115)}) node[cross=1.1pt, red] {};
\draw ({0.6+0.6*cos(125)}, {0.6*sin(125)}) circle node[cross] {};
\end{tikzpicture}
\\
Example: poles of $Y_-(t)$ (left) and zeroes of $Y_+(t)$ (right) lying in a half-plane $\H_r$.\end{center}
Call $\{Y_{\beta,r}\}$ a system of solutions to \ref{meroRHpb}. For null-vectors $\beta$, $Y_{\beta,r}\equiv 1$. For every non-active ray $r$ occurring clock-wise between $\ell$ and $-\ell$, take
 \begin{equation*}
Y_{\gamma^{\vee},r}(t):=Y_{+\,|\H_{r}}(t),\quad Y_{\gamma^{\vee},-r}(t):=Y_{-\,|\H_{-r}}(t).
\end{equation*}
By Proposition \ref{prop_xpm_A1}, $Y_{\gamma^{\vee},\pm r}(t)$ satisfy $RH_1$, $RH_2$, $RH_3$, and provide a solution to Problem \ref{meroRHpb} with only simple zeroes/poles. The shift $\th\mapsto \th+2k\pi\ii$, $k\in\Z$, produces another solution with shifted simple zeroes/poles. Notice moreover that, since the jumping factors $\stokes(\pm\ell)(t)=S(\pm\theta, \mp z/t)$, defined in \eqref{S}, admits a factorisation in an infinite product $\prod S_j(t)$, we have that, if $\{Y_r\}$ satisfies $RH_0$--$RH_3$, so $\{Y_r\cdot S_j\}$ does. 

\subsection{General case}\label{sec_gensol}

Solutions to the Riemann-Hilbert problems in the finite uncoupled case are obtained by superimposing the solution in the doubled $A_1$ case along any \lq\lq active direction\rq\rq . Let $(\Gamma,Z,\Omega)$ denote an integral uncoupled convergent BPS structure. For any generic ray $r$, define 
	\begin{equation}\label{Gactiver}
	\Gamma_r^{\Omega} := \left\{ \gamma\in\Gamma : \Omega(\gamma)\neq 0 \text{ and } 0<\arg \left(\frac{v}{Z(\gamma)}\right)<\pi,\ \forall v\in r \right\},
	\end{equation}
the set of active classes $\gamma$ whose corresponding active ray lies \lq\lq on the right\rq\rq\ of $r$. We also define 
	\[
	\epsilon_{(\beta,\gamma)}=\sgn\bra\beta,\gamma\ket \quad \text{and} \quad a_{(\beta,\gamma)}=\begin{cases}\theta(\gamma) & \text{if } \epsilon_{\beta,\gamma}=1 \\
	1-\theta(\gamma) & \text{if } \epsilon_{\beta,\gamma}=-1 \end{cases}.
	\]
For $\beta\in\Gamma$, a minimal solution $Y_{\beta,r}$ is given by the restriction to $\H_r$ of 
	\begin{equation}\label{sol_finite_uncoupled}
	\prod_{\gamma\in\Gactiver} {\mg{a_{(\beta,\gamma)}}{-\epsilon_{(\beta,\gamma)}\frac{Z(\gamma)}{2\pi\ii t}}}^{\Omega(\gamma)\bra\beta, \gamma\ket}.
	\end{equation}
The next Theorem \ref{thm_sol_meroRH_finite} then follows.
\begin{thm}\label{thm_sol_meroRH_finite}
The meromorphic Riemann-Hilbert problem \ref{meroRHpb} for finite uncoupled BPS structures admits solutions for every choice of $\xi\in\torus$. The unique minimal solution is given by \eqref{sol_finite_uncoupled}.
\end{thm}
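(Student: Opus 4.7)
The plan is to verify directly that the explicit candidate \eqref{sol_finite_uncoupled} satisfies all four conditions $RH_0$--$RH_3$ of Problem \ref{meroRHpb}, and then to invoke Proposition \ref{prop_mero_uniq} for uniqueness among minimal solutions. Finiteness of the BPS spectrum ensures that, for each $\beta$ and each generic ray $r$, the product \eqref{sol_finite_uncoupled} contains only finitely many factors, so the verification reduces factor by factor. The essential inputs are the algebraic identity $\mmg{x}{-y}\cdot\mmg{1-x}{y}=S^{-1}$ from part (1) of Lemma \ref{lem_Sol_jump}, the algebraic growth estimate in part (3), and the asymptotic behaviour $\mmg{x}{y}\to 1$ from part (4); Theorem \ref{thmSolInt} and Lemma \ref{lem_jumpXX} further justify interpreting these $\Lambda$-factors as the building blocks of a Riemann-Hilbert problem.

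The conditions $RH_0$, $RH_2$, $RH_3$ are essentially immediate. Each factor $\mg{a_{(\beta,\gamma)}}{-\epsilon_{(\beta,\gamma)}Z(\gamma)/(2\pi\ii t)}$ is meromorphic on $\H_r$, with zeros or poles only where the shifted Gamma function inside $\mmg{\cdot}{\cdot}$ acquires poles, namely at $t=Z(\gamma)/(\theta(\gamma)+2k\pi\ii)$ for non-positive $k$. These points are simple and all accumulate on a single side of $\ell_\gamma$ determined by the sign $\epsilon_{(\beta,\gamma)}$; this establishes $RH_0$ and simultaneously shows that the candidate is minimal. As $t\to 0$ along any direction in $\H_r$, the variable $-\epsilon_{(\beta,\gamma)}Z(\gamma)/(2\pi\ii t)$ tends to infinity along a direction avoiding $\Rp$, whence Lemma \ref{lem_Sol_jump}(4) gives $Y_{\beta,r}(t)\to 1$, which is $RH_2$. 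For $RH_3$, Lemma \ref{lem_Sol_jump}(3) provides algebraic bounds on each $\Lambda$-factor as its argument approaches zero, that is as $|t|\to\infty$; a finite product of such factors remains bounded by a power of $|t|$.

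The core of the proof is the jumping condition $RH_1$. Let $\Delta$ be the convex sector between $r_1$ and $r_2$ in clockwise order. From \eqref{Gactiver}, crossing an active ray $\ell_\gamma$ clockwise expels $\gamma$ from $\Gactiver$ and admits $-\gamma$, so $\Gamma_{r_1}^{\Omega}$ and $\Gamma_{r_2}^{\Omega}$ agree away from classes $\pm\gamma$ with $Z(\gamma)\in\Delta$. By multiplicativity of \eqref{sol_finite_uncoupled}, the ratio $Y_{\beta,r_1}(t)/Y_{\beta,r_2}(t)$ therefore factorises into one contribution per active ray inside $\Delta$. For a fixed such $\gamma$, the identities $\epsilon_{(\beta,-\gamma)}=-\epsilon_{(\beta,\gamma)}$, $\Omega(-\gamma)=\Omega(\gamma)$, $\bra\beta,-\gamma\ket=-\bra\beta,\gamma\ket$, together with the complementary assignment of $a_{(\beta,\pm\gamma)}$, bring the contribution exactly into the shape of Lemma \ref{lem_Sol_jump}(1), which converts it into $S(\theta(\gamma),-Z(\gamma)/t)^{\Omega(\gamma)\bra\beta,\gamma\ket}=(1-\xi(\gamma)e^{-Z(\gamma)/t})^{\Omega(\gamma)\bra\beta,\gamma\ket}$, precisely the Stokes factor prescribed in $RH_1$. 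The uncoupled and generic hypotheses ensure that factors associated with different active rays commute and do not interfere, so the finite product over $\gamma$ with $Z(\gamma)\in\Delta$ recovers $RH_1$.

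The principal obstacle is the careful bookkeeping of signs and branches: the two variants of Lemma \ref{lem_Sol_jump}(1), one for $\Im(y)>0$ and one for $\Im(y)<0$, must be invoked according to which side of $\ell_\gamma$ the evaluation point lies, and the pair $(\epsilon_{(\beta,\gamma)},a_{(\beta,\gamma)})$ is engineered precisely so that the correct variant is produced automatically. Once this matching is verified, minimality (simple critical points, all on one side of each $\ell_\gamma$) together with Proposition \ref{prop_mero_uniq} shows that the candidate is the unique minimal solution, completing the proof.
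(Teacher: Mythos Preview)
Your proposal is correct and follows essentially the same approach as the paper: the paper first establishes the doubled $A_1$ case as Proposition~\ref{prop_xpm_A1} using precisely the ingredients you name (Lemma~\ref{lem_Sol_jump}, Theorem~\ref{thmSolInt}, Lemma~\ref{lem_jumpXX}), and then declares that the general finite uncoupled case follows by superimposing these $A_1$ solutions along each active direction, which is exactly your factor-by-factor verification of $RH_0$--$RH_3$ for the product \eqref{sol_finite_uncoupled}. The only structural difference is that the paper isolates the $A_1$ case as a separate proposition before passing to the product, whereas you treat the product directly; the underlying argument and the key lemmas invoked are the same.
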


\subsection{Relation with the work \cite{FGS}}\label{sec_FGS}
In \cite{FGS} a Riemann-Hilbert problem strictly related to those considered here is studied. They are conceptually different, but the solutions in the uncoupled case are formally the same. 
In this section we briefly describe the relation between their approach and ours.   
 
The problem in \cite{FGS} is stated for \lq\lq positive BPS structures\rq\rq . It is viewed as the \lq\lq  conformal limit\rq\rq\ of a Riemann-Hilbert problem stated in \cite{GMN}, which has a different asymptotic behaviour at infinity. By \lq\lq positive BPS structure\rq\rq\ we mean a triple $(\Gamma,Z,\Omega)$ satisfying Definition \ref{def_BPS} apart from (i), together with a choice of a convex cone $\Gamma^+\subset\Gamma$ consisting in non-negative linear combinations of elements of a fixed basis for $\Gamma$. $Z(\Gamma^+)$ should lie in the strictly positive upper-half plane and $\Omega(\gamma)=0$ if $\gamma\not\in\Gamma^+$. 
  The solution is a piecewise continuous map $\Psi$ with values in $\Aut\C[\Gamma^+]$, and it is expressed as a sum of iterated integrals indexed by rooted trees whose vertices are labelled by elements of $\Gamma^+$. The proof involves 1) solving a fixed point integral problem, and 2) expanding terms of type $\log( 1 - x_{\gamma} )$ formally as $\sum_{k\geq 1} \frac{x_{k\gamma}}{k}$, in order to apply the Plemelj's theorem, which is a standard tool in the theory of Riemann-Hilbert problems.  
The arguments extend to a generic and convergent BPS structure, provided that there exists a strictly convex cone $\Gamma^+\subset\Gamma$ such that $\Omega(\gamma)= 0$ for $\gamma\in\Gamma\setminus\left(\Gamma^+\cup-\Gamma^+\right)$, and  $Z(\Gamma^+)$ is contained in a strictly convex cone in $\C^*$. Let $\gamma_1,\dots,\gamma_n$ be a basis for such $\Gamma^+$. In this case $\stokes(\ell)$ and $\Psi$ are replaced by maps $\stokes_{\bfs}(\ell)$ and $\Psi_{\bfs}$ with values in  the completion $\C[\Gamma]\pow{\bfs}$ of $\C[\Gamma]$ with respect to a vector $\bfs=(s_1,\dots,s_n)$ of extra variables attached to each characters $x_{\gamma_i}$, $i=1,\dots,n$. The reader can see  \cite[{Section 4}]{FGS} for the explicit formulae and the proof in the \lq\lq positive\rq\rq\ case and compare with \cite[{Section\,4}]{BS} for the notation in $\C[\Gamma]\pow{\bfs}$. Note also that, due to a different sign conventions, the central charge in \cite{FGS} is $-Z$.

In the case of an uncoupled BPS structure, integrals in the solution are not iterated and $\Psi(t)_{|\bfs=\mathbf{1}}(x_{\beta})$ is given by the sum
	\begin{equation*}
	x_{\beta}\cdot \exp\left( \frac{Z(\beta)}{t}\right)  \exp\left( \frap \sum_{\gamma\in\Gamma\setminus\{0\}} \bra \beta, \gamma\ket \dt(\gamma)x_{\gamma} \int_{Z(\gamma)\Rp}\frac{te^{Z(\gamma)/s}}{s(s-t)} d s \right),
	\end{equation*}
where 
	\[\dt(\gamma):=\sum_{\substack{h\in\N\setminus\{0\} \\ \gamma/h\in\Gamma}}\frac{\Omega(\gamma/h)}{h^2}.\] One may deduce a solution to the (scalar) meromorphic Riemann-Hilbert problem \ref{meroRHpb} for uncoupled BPS structures satisfying the conditions above by evaluating $\Psi(t)_{|\bfs=1}(x_{\beta})$ at $\xi\in\torus$. Recall that $x_{\gamma}(\xi)=\xi(\gamma)\in\C$ and $h^{-2}\bra\beta,\gamma\ket = h^{-1}\bra\beta,\gamma/h\ket$. Formally, reindexing and reordering the double sum $$\sum_{\gamma\in\Gamma\setminus\{0\}}\, \sum_{h>0,\,\gamma/h\in\Gamma},$$ we have that $\Psi(t)_{|\bfs=\mathbf{1}}\left(x_{\beta}\right)$ applied to $\xi$ can be written as 
	\begin{equation}\label{formalsoluncoupled}
	\xi(\beta) e^{Z(\beta)/t} \exp \left(\frap \sum_{\gamma\text{ active}}\sum_{h\geq 0}\bra\beta,\gamma\ket \frac{\Omega(\gamma)}{h}\int_{Z(\gamma)\Rp}\frac{t}{s(s-t)} e^{hZ(\gamma)/s}\xi(\gamma)^h ds\right).
	\end{equation}
The formula \eqref{formalsoluncoupled} is a piecewise continuous map $\C^*\setminus\Sigma\to\C$ to be extended over the active rays to half-planes.  If we now want to compare it with the minimal solution $Y_{\beta,r}$ \eqref{sol_finite_uncoupled} we should  assume that the sum $-\sum \frac{1}{h}\left(e^{Z(\gamma)/s}\xi(\gamma)\right)^h$ converges to $\ln \left(1 - e^{Z(\gamma)/s}\xi(\gamma)\right)$. Changing variable $s\mapsto Z(\gamma)/s$ and integrating by parts, one sees that \eqref{formalsoluncoupled} coincides with \eqref{sol_finite_uncoupled}. The equality is a consequence of Theorem \ref{thmSolInt}.

\section{The Hamiltonian vector field for the doubled $A_1$ BPS structure}\label{sec_hamilt}

Riemann-Hilbert problems are related with the theory of irregular differential equations as inverse problems. Let $U,V\in\mathfrak{g}=\gl_n(\C)$ and 
	\begin{equation}\label{*}
	\nabla=d-A dt, \quad A=\frac{U}{t^2}+\frac{V}{t},
	\end{equation}
be a meromorphic connection with irregular pole at the origin and logarithmic pole at infinity. For every direction $r$ which is not a Stokes ray with a non-trivial Stokes factor in $\gl_n(\C)$, a fundamental solution lives in the half-plane centred in $r$, undergoing a discontinuity given by a Stokes factor as $r$ crosses a Stokes ray, \cite{stokesth}. If the solution $Y$ to the corresponding RH problem can be inverted, we compute $A=dY\cdot Y^{-1}$. 

In this section we describe a similar picture. If $\torus$ is a symplectic torus, we look at the solution to the $\Aut(\torus)$--valued Riemann-Hilbert problem as fundamental solution to a meromorphic connection of type \eqref{*} with $U,V$ in a different Lie algebra. More precisely, they are symplectic vector fields over the torus. 
\begin{rmk}A family of connections of the form \eqref{*} plays an important role in the theory of Frobenius manifolds. This points of view, in the context of BPS structures, is going to be developed and formalised by Bridgeland in the work in progress \cite{bridg_RHpb2}. 
\end{rmk}

For simplicity, we assume that $\Pi$ is a lattice of finite rank $m$ with trivial pairing 
	\[\bra-,-\ket\equiv 0,\]
and $(\Pi,Z,\Omega)$ a BPS structure satisfying all the conditions of Definition \ref{conditions}. Denote by $\Gamma$ the doubled lattice $\Pi\oplus\Pi^\vee$ with pairing \eqref{doubled_pairing} and by $\torus$ the twisted torus associated with $\Gamma$. In Theorem \ref{thm_F} below we compute the connection on the $\Aut(\torus)$-principal bundle corresponding to the doubled BPS data, having generalised monodromy given by
	\[\left\{\left(\ell_{\gamma}, \stokes(\ell_{\gamma})\right) \mid \Omega(\gamma)\neq 0\right\}.\] 
This has the form
	\begin{equation}\label{eqq}
	d - \left(\frac{Z}{t^2} + \frac{\Ham_F}{t}\right)dt,
	\end{equation}
where $Z$ is a vector field corresponding to the central charges, and $\Ham_F$ is the Hamiltonian vector field of a function $F:\torus\to\C$ depending on $Z$ and on the BPS spectrum, due to the Stokes factors. The work of Bridgeland and Toledano-Laredo \cite{BTL1} suggests that the residue part of the connection associated to a BPS structure by mean of such a Riemann-Hilbert problem should be seen as the carrier of the information of the BPS spectrum. We define the function $$F_\Omega=\frac{1}{2\pi i} \sum_{\gamma\in\Pi\setminus\{0\}} \Omega(\gamma)\Li_2\left(x_{\gamma}\right)$$
on the twisted torus and we interpret it as a generating function for the $\Omega$--invariants.

\begin{thm}\label{thm_F}
A solution $\Psi$ to the $\Aut\torus$-valued RH problem attached to $(\Gamma,Z,\Omega)$ 
is a flat section of the meromorphic connection 
	\begin{equation*}
	\nabla_{\Omega} = d - \left( \frac{Z}{t^2} + \frac{\Ham_{F_{\Omega}}}{t}\right)
	\end{equation*}
on the trivial $\Aut(\torus)$--bundle over $\C\PP^1$. 
\end{thm}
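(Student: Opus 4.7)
The plan is to verify $\nabla_{\Omega}\Psi = 0$ by evaluating both sides on characters $x_{\beta}$ of $\torus$, which reduces the claim to a system of scalar ODEs for the functions $X_{\beta}(t) := x_{\beta}(\Psi(t)(\xi))$ (with $\xi\in\torus$ fixed), and then checking each ODE against the explicit form of the solution given by Theorem \ref{thm_sol_meroRH_finite}.

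First I compute $\Ham_{F_{\Omega}}$ as a vector field on $\torus$. Using the Poisson bracket $[x_{\alpha},x_{\gamma}] = \bra\alpha,\gamma\ket x_{\alpha}x_{\gamma}$ together with the chain rule applied to $\Li_2'(z) = -\log(1-z)/z$, one obtains
\begin{equation*}
\Ham_{F_{\Omega}}(x_{\beta}) = \frac{1}{2\pi\ii}\sum_{\gamma\in\Pi\setminus\{0\}} \Omega(\gamma)\bra\beta,\gamma\ket\log(1-x_{\gamma})\cdot x_{\beta}.
\end{equation*}
The flatness equation $\dot\Psi\Psi^{-1} = Z/t^2 + \Ham_{F_{\Omega}}/t$, evaluated on $x_{\beta}$ at $\xi$, therefore reads
\begin{equation*}
\dot X_{\beta}(t) = \left(\frac{Z(\beta)}{t^2} + \frac{1}{2\pi\ii\, t}\sum_{\gamma\in\Pi\setminus\{0\}}\Omega(\gamma)\bra\beta,\gamma\ket\log(1-X_{\gamma}(t))\right)X_{\beta}(t).
\end{equation*}
For $\gamma$ active, vanishing of the pairing between active classes forces $Y_{\gamma,r}\equiv 1$ and hence $X_{\gamma}(t) = \xi(\gamma)e^{-Z(\gamma)/t}$. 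Substituting $X_{\beta} = Y_{\beta,r}\cdot\xi(\beta)e^{-Z(\beta)/t}$ cancels the irregular contribution and the problem reduces to verifying the identity
\begin{equation*}
\frac{\dot Y_{\beta,r}(t)}{Y_{\beta,r}(t)} = \frac{1}{2\pi\ii\, t}\sum_{\gamma\in\Gactiver}\Omega(\gamma)\bra\beta,\gamma\ket\log\!\left(1-\xi(\gamma)e^{-Z(\gamma)/t}\right)
\end{equation*}
for every $\beta\in\Gamma$ and every generic ray $r$.

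By the product formula \eqref{sol_finite_uncoupled}, this identity decouples over active classes and reduces to the one-factor statement
\begin{equation*}
\frac{d}{dt}\log\mg{a}{-\epsilon\tfrac{Z(\gamma)}{2\pi\ii\, t}} = \frac{\epsilon}{2\pi\ii\, t}\log\!\left(1-\xi(\gamma)e^{-Z(\gamma)/t}\right),
\end{equation*}
for each $\epsilon\in\{\pm 1\}$, with $a = \theta(\gamma)/(2\pi\ii)$ if $\epsilon=+1$ and $a = 1-\theta(\gamma)/(2\pi\ii)$ if $\epsilon=-1$. I would establish this using the integral representation $\mg{\theta/(2\pi\ii)}{-w/(2\pi\ii)} = \XXp(\theta-2\pi\ii,w)$ of Theorem \ref{thmSolInt}. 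Setting $w = Z(\gamma)/t$ and differentiating $\log\XXp$ under the integral sign, one has $\partial_t\log\XXp = -(Z(\gamma)/t^2)\,\partial_w\log\XXp$; deforming the contour of integration past $s = t$ picks up a residue yielding $\log(1-e^{\theta-w})/(2\pi\ii\, t)$, exactly as in the jump calculation of Lemma \ref{lem_jumpXX}. The opposite sign $\epsilon = -1$ is handled by the reflection identity of Lemma \ref{lem_Sol_jump}(1), which relates $\mg{1-x}{y}$ to $\mg{x}{-y}$ up to the Stokes factor $S$ whose logarithmic derivative supplies the expected sign flip.

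The main obstacle is this residue computation and the careful tracking of branches and orientations: $\theta(\gamma)$ is only defined modulo $2\pi\ii$, the sign $\epsilon_{(\beta,\gamma)}$ changes discontinuously with the position of $r$, and the deformed contour must avoid the poles of $\XXp$ while preserving the logarithm branches used in defining $\Lambda$. A cleaner parallel route is to start from the integral representation of $\log Y_{\beta,r}$ reviewed in Section \ref{sec_FGS}, of the shape
\begin{equation*}
\log Y_{\beta,r}(t) = -\frac{1}{2\pi\ii}\sum_{\gamma\in\Gactiver}\Omega(\gamma)\bra\beta,\gamma\ket\int_{Z(\gamma)\Rp}\frac{t\,ds}{s(s-t)}\log\!\left(1-\xi(\gamma)e^{\pm Z(\gamma)/s}\right),
\end{equation*}
with the appropriate sign coming from the \cite{FGS} versus Bridgeland conventions discussed there. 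Differentiating in $t$ converts $t/(s(s-t))$ into $1/(s-t)^2$, and a contour deformation extracts precisely the residue at $s = t$, producing the required $\log(1-\xi(\gamma)e^{-Z(\gamma)/t})/(2\pi\ii\, t)$. Performing this check in every sector verifies the ODE for all $\beta$ and hence proves that $\Psi$ is a flat section of $\nabla_{\Omega}$.
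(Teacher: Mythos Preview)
Your central single--factor identity
\[
\frac{d}{dt}\log\mg{a}{-\epsilon\tfrac{Z(\gamma)}{2\pi\ii\, t}} \;=\; \frac{\epsilon}{2\pi\ii\, t}\log\!\bigl(1-\xi(\gamma)e^{-Z(\gamma)/t}\bigr)
\]
is false, and this breaks the argument. A quick check: as $t\to 0$ in $\H_r$ the right--hand side is exponentially small, while by the asymptotic expansion $\log\mg{x}{y}\sim\sum_{m\ge 1}\tfrac{(-1)^{m+1}B_{m+1}(x)}{m(m+1)}y^{-m}$ the left--hand side tends to the nonzero constant $\pi\ii\, B_2(a)\big/\!\bigl(\epsilon Z(\gamma)\bigr)$. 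The same objection kills the ``residue'' route through the integral of Section~\ref{sec_FGS}: for $t\in\H_r$ the point $s=t$ never lies on the contour $Z(\gamma)\Rp$, so deforming past it picks up nothing, and the derivative of the Cauchy kernel simply does not produce the logarithm you need.

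The underlying mistake is in how you read the flatness equation. Writing $\dot\Psi\Psi^{-1}=A$ and evaluating $A(x_\beta)$ at $\Psi(t)(\xi)$ corresponds to $\dot\Psi=A\circ\Psi$ (vector field composed on the left). The paper instead proves $\dot\Psi=d\Psi\cdot A$, i.e.\ the vector field $A=Z/t^2+\Ham_{F_\Omega}/t$ acts on the \emph{source} variables of $\Psi$: this is exactly why the Jacobian of $\Psi_\pm$ enters in \eqref{rhs}, and why the relevant scalar identity is \eqref{gradx},
\[
t\,\partial_t\log Y_\pm-\tfrac{z}{t}\,\partial_\theta\log Y_\pm=-\tfrac{1}{2\pi\ii}(\theta-\pi\ii),
\]
which mixes the $t$-- and $\theta$--derivatives of $Y_\pm$ and yields a \emph{polynomial} in $\theta$ rather than a transcendental $\log(1-e^{\theta-z/t})$. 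Your unjustified passage from the full sum over $\Pi\setminus\{0\}$ to $\Gactiver$ is a symptom of the same issue: in the correct computation one pairs $\gamma$ with $-\gamma$ and uses $\Li_2(e^{\theta})+\Li_2(e^{-\theta})=-\tfrac{(2\pi\ii)^2}{2}B_2(\theta/2\pi\ii)$ to turn $F_\Omega$ into a quadratic in $\theta(\gamma)$, which is then matched against \eqref{gradx}. Without that pairing and the $\partial_\theta$--term, the ODE you wrote down cannot hold.
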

Before proving the Theorem, recall from Definition \ref{def_double} that the doubled BPS structure $(\Gamma,Z,\Omega)$ is a finite convergent uncoupled BPS structure of rank $2m$. In particular 
\begin{itemize}
\item $Z\in\Hom(\Gamma,\C)$, $Z_{|\Pi^\vee}\equiv 0$,
\item $\Omega:\Gamma\to\Z$, $\Omega_{|\Pi^\vee}\equiv 0$,
\end{itemize}
If $\{\gamma_1,\dots, \gamma_m\}$ is a basis of $\Pi$, then $\torus$ inherits logarithmic coordinates 
	\begin{equation*}
	\th_j:=\th(\gamma_j),\quad \th_j^{\vee}:=\th(\gamma_j^{\vee}),\quad j=1,\dots, m,
	\end{equation*}
and comes equipped with the symplectic form $\omega=-\sum_{j=1}^{m} d\th_j \wedge d\th_j^{\vee}$. The meromorphic Riemann-Hilbert problem \ref{scalarRHpb} for $(\Gamma,Z,\Omega)$ admits solutions $Y_{\beta,r}$ defined in \eqref{sol_finite_uncoupled}. These induce a solution $\Psi:\C^*\setminus\Sigma \to \Aut(\torus)$ to the $\Aut(\torus)$-valued problem \ref{autTvaluedRHpb}
	\[
	\left(\Psi(t)(\xi)\right)(\beta) = e^{-Z(\beta)/t}\cdot Y_{\beta,r}\cdot \xi(\beta).
	\]

\begin{proof}[Proof of Theorem \ref{thm_F}]
We first compute the connection \eqref{eqq} for the doubled $A_1$ BPS structure of Example \ref{exm_A1}, from which we also borrow the notation. The proof generalises to the doubled of any uncoupled BPS structure. First observe that the solution $Y_{\pm}(t)$ to Problem \ref{RHA1} satisfies
	\begin{equation}\label{gradx}
	t \deld{t} \log Y_{\pm}(t) - \frac{z}{t}\deld{\theta}\log Y_{\pm}(t) = -\frac{1}{2\pi i} \left(\theta-\pi i\right).
	\end{equation}
This follows by direct computation from Lemma \ref{lem_grad} and the definition of $Y_{\pm}(t)$. Indeed we have
	\begin{equation}\label{xpmXX}
	Y_-(t)=\XXp\left( \theta-2\pi i ,\, \frac{z}{t}\right), \quad
	Y_+(t)=\XXp\left( -\theta,\, - \frac{z}{t}\right)^{- 1}
	\end{equation}
in some domain.

$Y_{\pm}$ induces a map $\Psi_{\pm}:\C\setminus\ell_{\pm}\to \Aut\torus$ defined by
	\begin{align*}
	\Psi_{\pm}(t)(\xi)(\alpha) &= e^{-Z(\alpha)/t}\cdot \xi(\alpha), \\
	\Psi_{\pm}(t)(\xi)(\alpha^\vee) &= e^{-Z(\alpha^\vee)/t}\cdot Y_{\pm}(t)\cdot \xi(\alpha^\vee).
	\end{align*}
As $Z(\alpha^\vee)=0$, $Z(\alpha)=:z$, in logarithmic coordinates $\Psi_{\pm}$ reads
	\[ \Psi_{\pm}(t): \begin{cases} \theta \mapsto -\frac{z}{t} + \theta \\
	\theta^\vee \mapsto \theta^\vee + \log Y_{\pm}(t)\end{cases},
	\]
with $t$-derivative components
	\begin{equation}\label{dPhidt}
	\begin{pmatrix} \frac{z}{t^2}\\
	\frac{d}{d t}\log Y_{\pm}(t)
	\end{pmatrix},
	\end{equation}
and Jacobian for fixed $t$
	\[ Jac\,\Psi_{\pm} = \begin{pmatrix} 1 & \deld{\th} \log Y_{\pm} \\ 0 & 1 \end{pmatrix}.
	\]
By $RH_1$, $RH_2$, $RH_3$, $\Psi_{\pm}$ solves a differential equation with leading term $z/t^2$, Stokes factors $\stokes(\pm\ell)$, and at most logarithmic pole at infinity. We may write such an equation as 
	\begin{equation}\label{odeX}
	\frac{d \Psi_{\pm}(t)}{d t} = \left( \frac{Z}{t^2} + \frac{\Ham_F}{t}\right) \Psi_{\pm}(t),
	\end{equation} 
where $Z = z \frac{\del}{\del \th}$ and $\Ham_F$  is the hamiltonian vector field 
	\[
	\Ham_F = - \frac{\del F}{\del \thdual}\frac{\del}{\del \th} + \frac{\del F}{\del \th}\frac{\del}{\del \thdual}
	\]
of a function $F=F(\th,\thdual):\torus\to\C$. Knowing $Jac\,\Psi_{\pm}$, the right hand side of \eqref{odeX} has components
	\begin{equation}\label{rhs}
	\begin{pmatrix} \frac{1}{t^2}z - \frac{1}{t}\frac{\del F}{\del \thdual} \\
	\frac{z}{t^2} \frac{\del \log Y_{\pm}(t)}{\del \th} - \frac{1}{t} \frac{\del F}{\del \thdual}\frac{\del \log Y_{\pm}}{\del \th} + \frac{1}{t}\frac{\del F}{\del \th} \end{pmatrix}.
	\end{equation}
Equalling \eqref{dPhidt} and \eqref{rhs}, we deduce that $\frac{\del F}{\del \thdual} = 0$, and 
	\begin{equation*}
	-\frac{\del F}{\del \th} = -t\frac{\del}{\del t} \log Y_{\pm} + \frac{z}{t} \frac{\del}{\del \th} \log Y_{\pm}.
	\end{equation*}
By \eqref{gradx}, this means $-\frac{\del F}{\del \th} = \frap\left(\th-\pi i\right)$, or $F(\th,\thdual)= -\frac{\th^2}{4\pi i} + \frac{\th}{2} + const$.

The very same computations can be performed in higher dimension. For any generic ray $r$, and $j=1,\dots, m$, equation \eqref{gradx} generalises to 
	\[
	t \deld{t} \log Y_{\gamma_j^\vee,r}(t) - \sum_{k=1}^m \frac{z_k}{t}\deld{\theta_k}\log Y_{\gamma_j^\vee,r}(t) = \frac{1}{2\pi i} \sum_{\gamma\in\Gactiver} \coef{\gamma}{j}\bra\gamma_j^\vee,\gamma\ket \Omega(\gamma)\left(\theta(\gamma)-\pi i\right),
	\]
where $z_k:=Z(\gamma_k)$, $\coef{\gamma}{j}$ denotes the $j$-th component of $\gamma$ with respect to the chosen basis, and $\Gactiver$ was defined in \eqref{Gactiver}. Defininig the vector fields
	\begin{gather*}
	Z = \sum_j Z(\gamma_j) \frac{\del}{\del \th_j} + \sum_j Z(\gamma_j^{\vee}) \frac{\del}{\del \thdual_j} =\sum_j Z(\gamma_j) \frac{\del}{\del \th_j} , \\
	\Ham_F = - \sum_j \frac{\del F}{\del \thdual_j}\frac{\del}{\del \th_j} + \sum_j \frac{\del F}{\del \th_j}\frac{\del}{\del \thdual_j},
	\end{gather*}
one obtains that 
	\[-\frac{\del F}{\del \th_j} = \frap \sum_{\gamma\in\Gactiver} \coef{\gamma}{j}\Omega(\gamma) \left( \th(\gamma)-\pi\ii\right),	\quad j=1,\dots,m,
	\]
hence $F=- \sum_{\gamma\in\Gactiver} \Omega(\gamma) \left( \frac{\th(\gamma)^2}{4\pi\ii}-\frac{\th(\gamma)}{2}\right) + const$. Note that $\Gactive\subset\Gamma$, defined in \eqref{Gactive}, as well as $\Gactiver$ select half of the points of $\Gamma\setminus\{0\}$.

Recall now that $\Li_2\left(x_{\gamma}\right)=\sum_{k\geq 0}\frac{x_{k\gamma}}{k^2}$ and, for $\xi\in\torus$, $x_{\gamma}(\xi)=\xi(\gamma)=e^{\theta(\gamma)}$ in logarithmic coordinates. For $0\leq \Im\theta<2\pi$, the dilogarithm satisfies
	\[\Li_2(e^{\th})+\Li_2(e^{-\th})= -\frac{(2\pi\ii)^2}{2}B_2\left(\frac{\th}{2\pi\ii}\right),\]
where $B_2(x)=x^2-x+\frac{1}{6}$ denotes the second Bernoulli polynomial. Then the function $F_\Omega=\frac{1}{2\pi i} \sum_{\gamma\in\Pi\setminus\{0\}} \Omega(\gamma)\Li_2\left(x_{\gamma}\right)$ applied to $\xi\in\torus$ is
	\[
	F_{\Omega}(\xi)= -\sum_{\gamma\in\Gactive\setminus\{0\}}\Omega(\gamma)\left(\frac{\th(\gamma)^2}{4\pi\ii}-\frac{\th(\gamma)}{2}+\frac{1}{12}\right)
	\]
and has the required form.
\end{proof}

\bibliographystyle{plain}

\begin{thebibliography}{10}

\bibitem{stokesth}
Werner Balser, Wolfgang~B.\ Jurkat, and Donald~A.\ Lutz.
\newblock Birkhoff invariants and Stokes multipliers for meromorphic linear differential equations.
\newblock {\em Journal of Mathematical Analysis and Applications}, 71.1, 1979.

\bibitem{qRH} Anna Barbieri, Tom Bridgeland, and Jacopo Stoppa, \newblock A quantized Riemann-Hilbert problem in {D}onaldson-{T}homas theory, 
\newblock {arXiv preprint \texttt{arXiv:1905.00748}}, 2019

\bibitem{BS}
Anna Barbieri and Jacopo Stoppa.
\newblock Frobenius type and {CV} structures for {D}onaldson-{T}homas theory and a convergence property.
\newblock {\em Communications in {A}nalysis and {G}eometry}, to appear, 2017.

\bibitem{bridg_RHpb}
Tom Bridgeland.
\newblock Riemann-{H}ilbert problems from {D}onaldson-{T}homas theory.
\newblock {\em Inventiones Mathematicae}, 1-56 (2016).

\bibitem{bridg_RHpb2}
Tom Bridgeland.
\newblock Riemann-{H}ilbert problems from {D}onaldson-{T}homas theory {II}, {I}n preparation.

\bibitem{BTL1}
Tom Bridgeland and Valerio Toledano-Laredo.
\newblock {Stability conditions and Stokes factors.}
\newblock {\em Inventiones {M}athematicae}, 187(1):61--98, 2012.

\bibitem{FGS}
Sara~Angela Filippini, Mario Garcia-Fernandez, and Jacopo Stoppa.
\newblock Stability data, irregular connections and tropical curves.
\newblock {\em J.\ Sel.\ Math.\ New Series} \textbf{23} (2017).

\bibitem{fokas_painleve}
Athanassios~S.\ Fokas, Alexander~R.\ Its, Andrei~A.\ Kapaev, and Victor~Y.\ Novokshenov.
\newblock {\em Painlev\'e trascendents: the Riemann-Hilbert approach}.
\newblock AMS, Mathematical Surveys and Monographs, vol. 128, 2006.

\bibitem{gaiotto}
Davide Gaiotto.
\newblock {Opers and TBA}.
\newblock {arXiv preprint \texttt{arXiv:1403.6137}}, 2014.

\bibitem{GMN}
Davide Gaiotto, Gregory~W.\ Moore, and Andrew Neitzke.
\newblock Four-dimensional wall-crossing via three-dimensional field theory.
\newblock {\em Communications in Mathematical Physics}, 299(1):163--224, 2010.

\bibitem{gakhov_boundary_pbs}
Fedor~D.\ Gakhov.
\newblock {\em Boundary Value Problems}.
\newblock Pergamon Press, 1966.


\bibitem{KS}
Maxim Kontsevich and Yan Soibelman.
\newblock {Stability structures, motivic Donaldson-Thomas invariants and cluster transformations}, arXiv preprint \texttt{arXiv:0811.2435}, 2008.

\bibitem{magnus}
Wilhelm Magnus, Fritz Oberhettinger, and Raj~P.\ Soni.
\newblock {\em Formulas and theorems for the special functions of Mathematical Physics}.
\newblock Springer-Verlag Berlin, 1966.

\bibitem{muske}
Nikoloz~I.\ Muskhelishvili.
\newblock {\em Singular Integral Equations: Boundary Problems of Function Theory and Their Application to Mathematical Physics}.
\newblock Dover Publications, 1946.

\bibitem{SS}
Jacopo Scalise and Jacopo Stoppa.
\newblock {Variations of BPS structure and a large rank limit.}
\newblock {arXiv preprint \texttt{arXiv:1705.08820}}, 2017.

\bibitem{whittaker}
Edmund~T.\ Whittaker and George~N.\ Watson.
\newblock {\em A course of modern analysis}.
\newblock Cambridge University Press, 1950.

\end{thebibliography}

\end{document}